\documentclass[graybox]{svmult}

\usepackage{mathptmx}       
\usepackage{helvet}         
\usepackage{courier}        

\usepackage{makeidx}         
\usepackage{multicol}        
\usepackage[bottom]{footmisc}

\usepackage{amsmath,amssymb}
\usepackage[misc,geometry]{ifsym}
\usepackage{cite}

% ---------------------------------------------------

\makeindex

% ---------------------------------------------------

\begin{document}

\title*{The fuzzy Henstock--Kurzweil delta integral\\ 
on time scales}

\author{Dafang Zhao, Guoju Ye, Wei Liu and Delfim F. M. Torres}

\institute{Dafang Zhao \at 
College of Science, Hohai University, Nanjing, 210098, P. R. China \at
School of Mathematics and Statistics, Hubei Normal University,
Huangshi, 435002, P. R. China\\ 
\email{dafangzhao@163.com}
\and Guoju Ye \at 
College of Science, Hohai University, Nanjing, 210098, P. R. China\\
\email{yegj@hhu.edu.cn}
\and Wei Liu \at
College of Science, Hohai University, Nanjing, 210098, P. R. China\\
\email{liuw626@hhu.edu.cn}
\and Delfim F. M. Torres (\Letter) \at
Center for Research and Development in Mathematics and Applications (CIDMA),\\
Department of Mathematics, University of Aveiro, 3810--193 Aveiro, Portugal\\
\email{delfim@ua.pt}
}

\maketitle

% ---------------------------------------------------

\abstract{We investigate  properties of the fuzzy Henstock--Kurzweil delta
integral (shortly, FHK $\Delta$-integral) on time scales, and obtain
two necessary and sufficient conditions for FHK
$\Delta$-integrability. The concept of uniformly FHK
$\Delta$-integrability is introduced. Under this concept, we obtain
a uniformly integrability convergence theorem. Finally, we prove
 monotone and dominated convergence theorems for the FHK
$\Delta$-integral.}

\keywords{fuzzy Henstock--Kurzwei integral, convergence theorems, time scales.}

\smallskip

\noindent{\bf Mathematics Subject Classification (2010 )\/}: 26A42; 26E50; 26E70.

% ---------------------------------------------------

\section{Introduction}

The Lebesgue integral, with its convergence properties, is superior
to the Riemann integral. However, a disadvantage with respect to
Lebesgue's integral, is that it is hard to understand without
substantial mathematical maturity.  Also, the Lebesgue integral does
not inherit the naturalness of the Riemann integral. Henstock
\cite{H1} and Kurzweil \cite{K1} gave, independently, a slight, yet
powerful, modification of the Riemann integral to get the now called
Henstock--Kurzweil (HK) integral, which possesses all the convergence
properties of the Lebesgue integral. For the fundamental results of
HK integral, we refer to the papers \cite{BPM,H1,S1,S2,Y,ZY}
and monographs \cite{G,L2,SY}. As an important branch of the
HK integration theory, the fuzzy Henstock--Kurzweil (FHK) 
integral has been extensively studied in 
\cite{BPM12,D16,G04,GS09,HG15,M15,SH13,SH014,WG00,WG01}.

In 1988, Hilger introduced the theory of time scales in his Ph.D.
thesis \cite{H4}. A time scale $\mathbb{T}$ is an arbitrary nonempty
closed subset of $\mathbb{R}$. The aim is to unify and generalize
discrete and continuous dynamical systems, see, e.g.,
\cite{BT16,BCT2,BCT1,BP1,BP2,MR3393831,FB16,G1,OTT}. In \cite{PT},
Peterson and Thompson introduced a more general concept of integral,
i.e., the HK $\Delta$-integral, which gives a common generalization of
the Riemann $\Delta$ and Lebesgue $\Delta$-integral. The theory of HK
integration for real-valued and vector-valued functions on time
scales has been developed rather intensively, see, e.g., the
papers \cite{A,C,FMS,MS2,MS1,ST1,S3,T,MyID:375} and
references cited therein.

In 2015, Fard and Bidgoli introduced the FHK delta integral 
and presented some of its basic properties \cite{FB15}. 
Nonetheless, to our best knowledge, there is no systematic theory 
for the FHK delta integral on time scales. In this work, in order 
to complete the FHK delta integration theory, we give two necessary
and sufficient conditions of FHK delta integrability 
(see Theorems~\ref{thm3.3} and~\ref{thm3.8}). Moreover, 
we obtain some convergence theorems
for the FHK delta integral, in particular
Theorem~\ref{thm3.10} of dominated convergence
and Theorem~\ref{thm3.11} of monotone convergence.

After Section~\ref{sec:2} of preliminaries, 
in Section~\ref{sec:3} the definition of FHK delta integral
is introduced, and our necessary and sufficient conditions of FHK delta integrability
proved. We also obtain some convergence theorems. Finally, in Section~\ref{sec:4},
we give conclusions and point out some directions that deserve further study.

% ---------------------------------------------------

\section{Preliminaries }
\label{sec:2}

A fuzzy subset of the real axis $u:\mathbb{R}\rightarrow[0,1]$ 
is called a fuzzy number provided that 
\begin{enumerate}
\item[(1)] $u$ is normal: there exists $x_{0}\in\mathbb{R}$ with $u(x_{0})=1$;

\item[(2)] $u$ is fuzzy convex: 
$u(\lambda x_{1}+(1-\lambda)x_{2})\geq \min\{u(x_{1}),u(x_{2})\}$
for all $x_{1},x_{2}\in\mathbb{R}$ and all $\lambda \in(0,1)$;

\item[(3)] $u$ is upper semi-continuous;

\item[(4)] $[u]^{0}=\overline{\{x\in\mathbb{R}:u(x)>0\}}$ is compact.
\end{enumerate}
Denote by $\mathbb{R}_{\mathcal{F}}$ the space of fuzzy numbers.
 We define the $\alpha$-level set $[u]^{\alpha}$ by
$$
[u]^{\alpha}=\left\{x\in\mathbb{R}:u(x)\geq\alpha\right\},\quad
\alpha\in(0,1].
$$
From conditions $(1)$--$(4)$, $[u]^{\alpha}$ is denoted by
$[u]^{\alpha}=\left[\underline{u^{\alpha}},\overline{u^{\alpha}}\right]$.
For $u_{1},u_{2}\in\mathbb{R}_{\mathcal{F}}$ and
$\lambda\in\mathbb{R}$, we define
$$[u_{1}+u_{2}]^{\alpha}=[u_{1}]^{\alpha}+[u_{2}]^{\alpha} \ \ {\rm and}\ \
[\lambda\odot u_{1}]^{\alpha}=\lambda[u_{1}]^{\alpha}$$
for all $\alpha\in[0,1]$.
The Hausdorff distance between $u_{1}$ and $u_{2}$ is defined by
$$
\mathbf{D}(u_{1},u_{2})=\sup_{\alpha\in[0,1]}\max\left\{|\underline{u_{1}^{\alpha}}
-\underline{u_{2}^{\alpha}}|,|\overline{u_{1}^{\alpha}}-\overline{u_{2}^{\alpha}}|\right\}.
$$
Then, the metric space $(\mathbb{R}_{\mathcal{F}},\mathbf{D})$ is complete.
Let $a,b\in \mathbb{T}$. We define the half-open interval $[a,b)_{\mathbb{T}}$ by
$$
[a,b)_{\mathbb{T}}
=\left\{x\in \mathbb{T}: a \leq x < b\right\}.
$$
The open and closed intervals are defined similarly. For $x\in \mathbb{T}$, 
we denote by $\sigma$ the forward jump operator, i.e., $\sigma(x):=\inf\{y>x: y\in \mathbb{T}\}$, 
and by $\rho$ the backward jump operator, i.e., $\rho(x):=\sup\{y<x: y\in \mathbb{T}\}$. 
Here, we put $\sigma(\sup\mathbb{T})=\sup\mathbb{T}$ and $\rho(\inf\mathbb{T})=\inf\mathbb{T}$, 
where $\sup\mathbb{T}$ and $\inf\mathbb{T}$ are finite. In this situation, 
$\mathbb{T}^{\kappa}:=\mathbb{T}\backslash \{\sup\mathbb{T}\}$
and $\mathbb{T}_{\kappa}:=\mathbb{T}\backslash\{\inf\mathbb{T}\}$,
otherwise, $\mathbb{T}^{\kappa}:=\mathbb{T}$ and $\mathbb{T}_{\kappa}:=\mathbb{T}$. 
If $\sigma(x)>x$, then we say that $x$ is right-scattered, 
while if $\rho(x)<x$, then we say that $x$ is left-scattered. 
If $\sigma(x)=x$ and $x< \sup\mathbb{T}$, then $x$ is called right-dense, 
and if $\rho(x)=x$ and $x> \inf\mathbb{T}$, then $x$ is left-dense. 
The graininess functions $\mu$ and $\eta$ are defined by $\mu(x):=\sigma(x)-x$ 
and $\eta(x):=x-\rho(x)$, respectively.

In what follows, all considered intervals are intervals in
$\mathbb{T}$. A division $D$ of $[a,b]_{\mathbb{T}}$ is a finite set
of interval-point pairs
${\{([x_{i-1},x_{i}]_{\mathbb{T}},\xi_{i})\}}^{n}_{i=1}$ such that
$$
\bigcup^{n}_{i=1}[x_{i-1},x_{i}]_{\mathbb{T}}=[a,b]_{\mathbb{T}}
$$
and $\xi_{i}\in[a,b]_{\mathbb{T}}$ for each $i$. We write $\Delta
x_{i}=x_{i}-x_{i-1}$. We say that
$$
\delta(\xi)=(\delta_{L}(\xi),\delta_{R}(\xi))
$$ 
is a $\Delta$-gauge on $[a,b]_{\mathbb{T}}$ if $\delta_{L}(\xi)>0$ on
$(a,b]_{\mathbb{T}}$, $\delta_{R}(\xi)>0$ on $[a,b)_{\mathbb{T}}$,
$\delta_{L}(a)\geq 0$, $\delta_{R}(b)\geq 0$ and $\delta_{R}(\xi)\geq
\mu(\xi)$ for any $\xi\in [a,b)_{\mathbb{T}}$. The symbol
$\Gamma(\Delta, [a,b]_{\mathbb T})$ stands for the set of
$\Delta$-gauge on $[a,b]_{\mathbb T}$. Let $\delta^{1}(\xi)$ and
$\delta^{2}(\xi)$ be $\Delta$-gauges such that 
$$
0<\delta^{1}_{L}(\xi)< \delta^{2}_{L}(\xi)
$$ 
for any $\xi\in (a,b]_{\mathbb{T}}$ and 
$0< \delta^{1}_{R}(\xi)<\delta^{2}_{R}(\xi)$ 
for any $\xi\in [a,b)_{\mathbb{T}}$. Then we
call $\delta^{1}(\xi)$ finer than $\delta^{2}(\xi)$ and
write $\delta^{1}(\xi)<\delta^{2}(\xi)$. We say that
$D={\{([x_{i-1},x_{i}]_{\mathbb{T}},\xi_{i})\}}^{n}_{i=1}$ is a
$\delta$-fine HK division of $[a,b]_{\mathbb{T}}$ if
$\xi_{i}\in[x_{i-1},x_{i}]_{\mathbb{T}}\subset(\xi_{i}
-\delta_{L}(\xi_{i}),\xi_{i}+\delta_{R}(\xi_{i}))_{\mathbb{T}}$ 
for each $i$. Let $\mathfrak{D}(\delta,[a,b]_{\mathbb{T}})$ 
be the set of all $\delta$-fine HK divisions of
$[a,b]_{\mathbb{T}}$. Given an arbitrary $D\in
\mathfrak{D}(\delta,[a,b]_{\mathbb{T}})$,
$D={\{([x_{i-1},x_{i}]_{\mathbb{T}},\xi_{i})\}}^{n}_{i=1}$, we write
$$
S(f,D,\delta)=\displaystyle \sum^{n}_{i=1}f(\xi_{i})\Delta x_{i}
$$
for integral sums over $D$, whenever
$f:[a,b]_{\mathbb{T}}\rightarrow \mathbb{R}_{\mathcal{F}}$.

\begin{lemma}[See \cite{K87}]
\label{lem2.1}
Suppose that $u\in \mathbb{R}_{\mathcal{F}}$. Then,
\begin{enumerate}
\item[(1)] the interval $[u]^{\alpha}$ is closed for $\alpha\in [0,1]$;

\item[(2)] $[u]^{\alpha_{1}}\supset [u]^{\alpha_{2}}$
for $0\leq \alpha_{1}\leq \alpha_{2}\leq1$;

\item[(3)] for any sequence $\{\alpha_{n}\}$ satisfying
$\alpha_n\leq \alpha_{n+1}$ and   $\alpha_{n}\to\alpha\in (0,1]$, 
we have $\bigcap_{n=1}^{\infty}[u]^{\alpha_{n}}=[u]^{\alpha}$.
\end{enumerate}
Conversely, if a collection of subsets $\{\mathrm{u}^{\alpha}:\alpha\in [0,1]\}$ 
verify (1)--(3), then there exists a unique $\mathrm{u}\in \mathbb{R}_{\mathcal{F}}$
such that $[u]^{\alpha}=\mathrm{u}^{\alpha}$ for $\alpha\in (0,1]$ and
$[u]^{0}=\overline{\bigcup_{\alpha\in (0,1]}\mathrm{u}^{\alpha}}\subset \mathrm{u}^{0}$.
\end{lemma}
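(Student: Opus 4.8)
The plan is to treat the two implications separately: the forward one by direct verification from the definition of a fuzzy number, and the converse one by explicitly reconstructing the membership function from the prescribed level sets.

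For the forward direction, properties (2) and (3) are essentially immediate. Indeed, $x\in[u]^{\alpha_{2}}$ forces $u(x)\geq\alpha_{2}\geq\alpha_{1}$, which gives (2); and for (3) the inclusion $[u]^{\alpha}\subset\bigcap_{n}[u]^{\alpha_{n}}$ follows from (2), while the reverse inclusion follows because $u(x)\geq\alpha_{n}$ for all $n$ implies $u(x)\geq\sup_{n}\alpha_{n}=\alpha$. For (1), the case $\alpha=0$ is the definition of $[u]^{0}$ (a closure), and for $\alpha\in(0,1]$ upper semi-continuity of $u$ makes $\{x:u(x)\geq\alpha\}$ closed, boundedness comes from $[u]^{\alpha}\subset[u]^{0}$, and fuzzy convexity shows the set is an interval.

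For the converse, given a family $\{\mathrm{u}^{\alpha}:\alpha\in[0,1]\}$ satisfying (1)--(3), I would define $u:\mathbb{R}\to[0,1]$ by $u(x)=\sup\{\alpha\in(0,1]:x\in\mathrm{u}^{\alpha}\}$, with the convention $\sup\emptyset=0$. The heart of the argument is the identity $[u]^{\alpha}=\mathrm{u}^{\alpha}$ for every $\alpha\in(0,1]$: the inclusion $\mathrm{u}^{\alpha}\subset[u]^{\alpha}$ is clear from the definition of $u$, while for $[u]^{\alpha}\subset\mathrm{u}^{\alpha}$ one distinguishes the case $u(x)>\alpha$ (choose $\beta$ with $\alpha<\beta$ and $x\in\mathrm{u}^{\beta}$, then invoke (2)) from the case $u(x)=\alpha$, where either the supremum is attained, or there is a strictly increasing sequence $\alpha_{n}\uparrow\alpha$ with $x\in\mathrm{u}^{\alpha_{n}}$, so that (3) gives $x\in\bigcap_{n}\mathrm{u}^{\alpha_{n}}=\mathrm{u}^{\alpha}$. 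With this identity in hand, the four defining conditions for $u$ follow: normality by picking any point of the nonempty interval $\mathrm{u}^{1}$; upper semi-continuity since $\{x:u(x)\geq\alpha\}$ equals $\mathrm{u}^{\alpha}$ (closed by (1)) for $\alpha\in(0,1]$, equals $\mathbb{R}$ for $\alpha\leq0$, and is empty for $\alpha>1$; fuzzy convexity because for $\alpha<\min\{u(x_{1}),u(x_{2})\}$ both $x_{1},x_{2}$ lie in the interval $\mathrm{u}^{\alpha}$, hence so does $\lambda x_{1}+(1-\lambda)x_{2}$, and one lets $\alpha$ increase to the minimum; and compactness of $[u]^{0}$ together with $[u]^{0}\subset\mathrm{u}^{0}$, since $\{x:u(x)>0\}=\bigcup_{\alpha\in(0,1]}\mathrm{u}^{\alpha}\subset\mathrm{u}^{0}$ by (2), whence $[u]^{0}=\overline{\bigcup_{\alpha\in(0,1]}\mathrm{u}^{\alpha}}\subset\mathrm{u}^{0}$, the latter being a bounded closed interval.

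For uniqueness, if $v\in\mathbb{R}_{\mathcal{F}}$ also satisfies $[v]^{\alpha}=\mathrm{u}^{\alpha}$ for all $\alpha\in(0,1]$, then for each $x$ one has $v(x)=\sup\{\alpha\in(0,1]:v(x)\geq\alpha\}=\sup\{\alpha\in(0,1]:x\in\mathrm{u}^{\alpha}\}=u(x)$, so $v=u$. I expect the main obstacle to be precisely the converse identity $[u]^{\alpha}=\mathrm{u}^{\alpha}$ in the borderline case $u(x)=\alpha$, which is exactly where left-continuity in $\alpha$, i.e. property (3), is indispensable; relatedly, one must keep in mind that this equality can genuinely fail at $\alpha=0$, which is why only the inclusion $[u]^{0}\subset\mathrm{u}^{0}$ is claimed.
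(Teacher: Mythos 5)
The paper offers no proof of this lemma --- it is imported verbatim from Kaleva \cite{K87} (a Negoita--Ralescu-type representation theorem for fuzzy numbers), so there is nothing in the text to compare your argument against. Your proof is the standard one and is correct: the forward direction is routine, and in the converse the construction $u(x)=\sup\{\alpha\in(0,1]:x\in\mathrm{u}^{\alpha}\}$ together with the identity $[u]^{\alpha}=\mathrm{u}^{\alpha}$ for $\alpha\in(0,1]$ (with property (3) resolving the borderline case $u(x)=\alpha$) is exactly how this is done. Two small points are worth making explicit. First, in the unattained-supremum case you should use (2) to pass from ``$x\in\mathrm{u}^{\beta}$ for some $\beta$ arbitrarily close to $\alpha$ from below'' to ``$x\in\mathrm{u}^{\alpha_{n}}$ for every term of a chosen nondecreasing sequence $\alpha_{n}\uparrow\alpha$'' before invoking (3). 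Second, normality of $u$ needs $\mathrm{u}^{1}\neq\emptyset$, and boundedness of $[u]^{0}$ needs $\mathrm{u}^{0}$ bounded; both are only implicit in reading condition (1) as saying each $\mathrm{u}^{\alpha}$ is a nonempty bounded closed interval, so it is worth flagging that you are using that reading, since (1)--(3) as literally stated do not assert it.
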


\begin{lemma}[See \cite{GV86}]
\label{lem2.2}
Suppose that $u\in \mathbb{R}_{\mathcal{F}}$. Then,
\begin{enumerate}
\item[(1)] $\underline{u^{\alpha}}$ is bounded and nondecreasing;

\item[(2)] $\overline{u^{\alpha}}$ is bounded and nonincreasing;

\item[(3)] $\underline{u^{1}}\leq \overline{u^{1}}$;

\item[(4)] for $c\in (0,1]$, $\lim_{\alpha\rightarrow c^{-}}
\underline{u^{\alpha}}=\underline{u^{c}}$ and
$\lim_{\alpha\rightarrow c^{-}}\overline{u^{\alpha}}=\overline{u^{c}}$;

\item[(5)] $\lim_{\alpha\rightarrow 0^{+}}\underline{u^{\alpha}}
=\underline{u^{0}}$ and $\lim_{\alpha\rightarrow 0^{+}}\overline{u^{\alpha}}=\overline{u^{0}}$.
\end{enumerate}
Conversely, if $\underline{u^{\alpha}}$ and $\overline{u^{\alpha}}$
satisfy items (1)--(5), then there exists 
$\mathrm{u}\in \mathbb{R}_{\mathcal{F}}$ such that
$$
[\mathrm{u}]^{\alpha}=\left[\underline{\mathrm{u}^{\alpha}},
\overline{\mathrm{u}^{\alpha}}\right]
=\left[\underline{u^{\alpha}},
\overline{u^{\alpha}}\right].
$$ 
\end{lemma}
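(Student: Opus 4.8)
The plan is to derive both implications from Lemma~\ref{lem2.1} by translating the nesting properties of the $\alpha$-level sets into monotonicity and one-sided continuity of their endpoints. For the direct part, I would fix $u\in\mathbb{R}_{\mathcal F}$ and write $[u]^{\alpha}=[\underline{u^{\alpha}},\overline{u^{\alpha}}]$. Items (1) and (2) are immediate: Lemma~\ref{lem2.1}(2) gives $[u]^{\alpha_{1}}\supset[u]^{\alpha_{2}}$ when $\alpha_{1}\leq\alpha_{2}$, hence $\underline{u^{\alpha_{1}}}\leq\underline{u^{\alpha_{2}}}$ and $\overline{u^{\alpha_{1}}}\geq\overline{u^{\alpha_{2}}}$, while boundedness holds because every $[u]^{\alpha}$ sits inside the compact set $[u]^{0}$. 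For (3), normality provides $x_{0}$ with $u(x_{0})=1$, so $x_{0}\in[u]^{1}$ and $\underline{u^{1}}\leq\overline{u^{1}}$.

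Item (4) is where the nesting lemma does real work. Fix $c\in(0,1]$; monotonicity guarantees the one-sided limits exist, and to compute them I would pick any increasing sequence $\alpha_{n}\uparrow c$. Then $\{[u]^{\alpha_{n}}\}$ is a decreasing sequence of closed bounded intervals, so
\[
\bigcap_{n=1}^{\infty}[u]^{\alpha_{n}}=\Bigl[\lim_{n\to\infty}\underline{u^{\alpha_{n}}},\ \lim_{n\to\infty}\overline{u^{\alpha_{n}}}\Bigr],
\]
and by Lemma~\ref{lem2.1}(3) this equals $[u]^{c}$; comparing endpoints gives (4). Item (5) uses that $\{x:u(x)>0\}=\bigcup_{\alpha>0}[u]^{\alpha}$, so clause (4) of the definition of a fuzzy number yields $[u]^{0}=\overline{\bigcup_{\alpha>0}[u]^{\alpha}}$; since this family is nested, its union is an interval with the same infimum and supremum as its closure, whence $\underline{u^{0}}=\inf_{\alpha>0}\underline{u^{\alpha}}=\lim_{\alpha\to0^{+}}\underline{u^{\alpha}}$ and $\overline{u^{0}}=\sup_{\alpha>0}\overline{u^{\alpha}}=\lim_{\alpha\to0^{+}}\overline{u^{\alpha}}$, using the monotonicity already established.

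For the converse I would set $\mathrm{u}^{\alpha}:=[\underline{u^{\alpha}},\overline{u^{\alpha}}]$, $\alpha\in[0,1]$, and verify the hypotheses of the converse part of Lemma~\ref{lem2.1}. Each $\mathrm{u}^{\alpha}$ is a nonempty closed interval because $\underline{u^{\alpha}}\leq\underline{u^{1}}\leq\overline{u^{1}}\leq\overline{u^{\alpha}}$ by (1), (3), (2); the inclusion $\mathrm{u}^{\alpha_{1}}\supset\mathrm{u}^{\alpha_{2}}$ for $\alpha_{1}\leq\alpha_{2}$ is the monotonicity (1)--(2); and $\bigcap_{n}\mathrm{u}^{\alpha_{n}}=\mathrm{u}^{\alpha}$ for $\alpha_{n}\uparrow\alpha\in(0,1]$ follows, exactly as above, by intersecting nested intervals and invoking the left-continuity (4). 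Lemma~\ref{lem2.1} then produces a unique $\mathrm{u}\in\mathbb{R}_{\mathcal F}$ with $[\mathrm{u}]^{\alpha}=\mathrm{u}^{\alpha}$ for $\alpha\in(0,1]$ and $[\mathrm{u}]^{0}=\overline{\bigcup_{\alpha>0}\mathrm{u}^{\alpha}}$, and I would finish by noting that the closure of this nested union equals $[\inf_{\alpha>0}\underline{u^{\alpha}},\sup_{\alpha>0}\overline{u^{\alpha}}]=[\underline{u^{0}},\overline{u^{0}}]=\mathrm{u}^{0}$ by the right-continuity condition (5). I expect the main obstacle to be the bookkeeping around $\alpha=0$ in both directions: reconciling the topological closure in clause (4) of the definition of a fuzzy number with the analytic condition (5), rather than the purely formal manipulations with nested intervals, which are routine.
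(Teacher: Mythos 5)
The paper states this lemma without proof, importing it from Goetschel--Voxman \cite{GV86}, so there is no internal argument to compare against. Your derivation from Lemma~\ref{lem2.1} and the four defining conditions of a fuzzy number is correct and self-contained: the monotonicity and boundedness in (1)--(2) come from the nesting of level sets inside the compact $[u]^{0}$, (3) from normality, (4) from identifying the intersection of the nested closed bounded intervals $[u]^{\alpha_{n}}$ with the interval of endpoint limits (nonempty because it contains $[u]^{c}\supset[u]^{1}\ni x_{0}$) and invoking Lemma~\ref{lem2.1}(3), and (5) from $[u]^{0}=\overline{\bigcup_{\alpha>0}[u]^{\alpha}}$ together with the fact that a nested union of intervals with a common point is an interval sharing the infimum and supremum of its closure. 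The converse direction correctly feeds conditions (1)--(4) back into the representation half of Lemma~\ref{lem2.1} and uses (5) to upgrade the inclusion $[\mathrm{u}]^{0}=\overline{\bigcup_{\alpha\in(0,1]}\mathrm{u}^{\alpha}}\subset\mathrm{u}^{0}$ to an equality, which is exactly the point where the two lemmas differ and where your bookkeeping around $\alpha=0$ is needed.
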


% ---------------------------------------------------

\section{The fuzzy Henstock--Kurzweil delta integral}
\label{sec:3}

We introduce the concept of fuzzy Henstock--Kurzweil 
(FHK) delta integrability.

\begin{definition}
\label{defn3.1} 
A function $f:[a,b]_{\mathbb{T}}\rightarrow
\mathbb{R}_{\mathcal{F}}$ is called FHK 
$\Delta$-integrable on $[a,b]_{\mathbb{T}}$ with the FHK
$\Delta$-integral $\tilde{A}=(FHK)\displaystyle
\int_{[a,b]_{\mathbb{T}}}f(x)\Delta x$, if for each $\epsilon>0$
there exists a $\delta\in \Gamma(\Delta, [a,b]_{\mathbb{T}})$ such that
$\mathbf{D}\left(S(f,D,\delta), \tilde{A}\right)<\epsilon$
for each $D\in \mathfrak{D}(\delta,[a,b]_{\mathbb{T}})$. 
The family of all FHK $\Delta$-integrable functions on 
$[a,b]_{\mathbb{T}}$ is denoted by $\mathcal{FHK}_{[a,b]_{\mathbb{T}}}$.
\end{definition}

\begin{remark}
\label{rmk3.1}
It is clear that Definition~\ref{defn3.1} is more general than the HK $\Delta$-integral
introduced by Peterson and Thompson in \cite{PT} and more general than the
FH integral introduced by Wu and Gong in \cite{WG00,WG01}.
\end{remark}

The proofs of Theorems~\ref{thm3.1} and \ref{thm3.2} are straightforward
and are left to the reader.

\begin{theorem}
\label{thm3.1}
The FHK $\Delta$-integral of $f(x)$ is unique.
\end{theorem}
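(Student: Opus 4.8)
The plan is the usual metric-space uniqueness argument: assume $f$ has two FHK $\Delta$-integrals and show their Hausdorff distance is smaller than every positive number. So suppose $\tilde{A}_{1}$ and $\tilde{A}_{2}$ are both FHK $\Delta$-integrals of $f$ on $[a,b]_{\mathbb{T}}$, and fix $\epsilon>0$. By Definition~\ref{defn3.1} there are $\delta^{1},\delta^{2}\in\Gamma(\Delta,[a,b]_{\mathbb{T}})$ with $\mathbf{D}\bigl(S(f,D,\delta^{j}),\tilde{A}_{j}\bigr)<\epsilon/2$ for every $D\in\mathfrak{D}(\delta^{j},[a,b]_{\mathbb{T}})$, $j=1,2$.

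Next I would form a common $\Delta$-gauge $\delta$ by setting $\delta_{L}(\xi)=\min\{\delta^{1}_{L}(\xi),\delta^{2}_{L}(\xi)\}$ and $\delta_{R}(\xi)=\min\{\delta^{1}_{R}(\xi),\delta^{2}_{R}(\xi)\}$. One checks $\delta\in\Gamma(\Delta,[a,b]_{\mathbb{T}})$: positivity on $(a,b]_{\mathbb{T}}$ resp. $[a,b)_{\mathbb{T}}$, nonnegativity at the endpoints, and the crucial bound $\delta_{R}(\xi)\geq\mu(\xi)$ on $[a,b)_{\mathbb{T}}$ are all preserved under a pointwise minimum of two functions each having these properties. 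Since $\delta_{L}\leq\delta^{j}_{L}$ and $\delta_{R}\leq\delta^{j}_{R}$, every $\delta$-fine HK division is simultaneously $\delta^{1}$-fine and $\delta^{2}$-fine. Choosing any $D\in\mathfrak{D}(\delta,[a,b]_{\mathbb{T}})$, the triangle inequality for the metric $\mathbf{D}$ gives
$$
\mathbf{D}(\tilde{A}_{1},\tilde{A}_{2})
\leq \mathbf{D}\bigl(\tilde{A}_{1},S(f,D,\delta)\bigr)
+\mathbf{D}\bigl(S(f,D,\delta),\tilde{A}_{2}\bigr)
<\frac{\epsilon}{2}+\frac{\epsilon}{2}=\epsilon.
$$
As $\epsilon>0$ is arbitrary, $\mathbf{D}(\tilde{A}_{1},\tilde{A}_{2})=0$, and since $(\mathbb{R}_{\mathcal{F}},\mathbf{D})$ is a metric space this forces $\tilde{A}_{1}=\tilde{A}_{2}$.

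The only point requiring care is that $\mathfrak{D}(\delta,[a,b]_{\mathbb{T}})\neq\varnothing$, i.e.\ that for every $\Delta$-gauge a $\delta$-fine HK division actually exists; this is the time-scale analogue of Cousin's lemma, and it is exactly here that the structural requirement $\delta_{R}(\xi)\geq\mu(\xi)$ in the definition of a $\Delta$-gauge is used (so that right-scattered points can be handled). Everything else — checking that the minimum of two $\Delta$-gauges is again a $\Delta$-gauge, and the triangle inequality for $\mathbf{D}$ — is routine.
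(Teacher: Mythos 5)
Your argument is correct and is exactly the standard one the authors have in mind: the paper omits the proof of Theorem~\ref{thm3.1} as ``straightforward and left to the reader,'' and your proposal (take the pointwise minimum of the two $\Delta$-gauges, pick a common $\delta$-fine division, apply the triangle inequality for $\mathbf{D}$) fills it in properly. You are also right to flag that the only substantive ingredient is the nonemptiness of $\mathfrak{D}(\delta,[a,b]_{\mathbb{T}})$, i.e.\ the time-scale Cousin lemma, which is established in the Peterson--Thompson reference \cite{PT} that the paper builds on.
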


\begin{theorem}
\label{thm3.2}
If $f(x), g(x)\in\mathcal{FHK}_{[a,b]_{\mathbb{T}}}$
and $\alpha,\beta\in \mathbb{R}$, then 
$$\alpha f(x)
+\beta g(x)\in\mathcal{FHK}_{[a,b]_{\mathbb{T}}}
$$ 
with
\begin{multline*}
(FHK)\int_{[a,b]_{\mathbb{T}}}(\alpha f(x)+\beta g(x))\Delta x\\
=\alpha(FHK)\int_{[a,b]_{\mathbb{T}}}f(x)\Delta x
+\beta(FHK)\int_{[a,b]_{\mathbb{T}}}g(x)\Delta x.
\end{multline*}
\end{theorem}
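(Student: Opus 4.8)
The plan is to prove the two building blocks---integrability of $\alpha f$ and of $f+g$---separately, and then combine them. Write $\tilde{A}_{f}=(FHK)\int_{[a,b]_{\mathbb{T}}}f(x)\Delta x$ and $\tilde{A}_{g}=(FHK)\int_{[a,b]_{\mathbb{T}}}g(x)\Delta x$; these exist by hypothesis and are unique by Theorem~\ref{thm3.1}. The whole argument rests on two elementary facts about the Hausdorff metric,
\begin{gather*}
\mathbf{D}(u_{1}+u_{2},v_{1}+v_{2})\leq \mathbf{D}(u_{1},v_{1})+\mathbf{D}(u_{2},v_{2}),\\
\mathbf{D}(\lambda\odot u,\lambda\odot v)=|\lambda|\,\mathbf{D}(u,v),
\end{gather*}
both of which follow at once from the level-set identities $[u_{1}+u_{2}]^{\alpha}=[u_{1}]^{\alpha}+[u_{2}]^{\alpha}$, $[\lambda\odot u]^{\alpha}=\lambda[u]^{\alpha}$ and from the definition of $\mathbf{D}$ as a supremum over $\alpha$ of endpoint distances; by induction the first inequality also holds for finite sums, and the same identities give $S(f+g,D,\delta)=S(f,D,\delta)+S(g,D,\delta)$ and $S(\alpha f,D,\delta)=\alpha\odot S(f,D,\delta)$ for every division $D$.

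First I would treat $\alpha f$. If $\alpha=0$ there is nothing to prove, so assume $\alpha\neq 0$. Given $\epsilon>0$, choose $\delta\in\Gamma(\Delta,[a,b]_{\mathbb{T}})$ witnessing FHK $\Delta$-integrability of $f$ with tolerance $\epsilon/|\alpha|$. Then for every $D\in\mathfrak{D}(\delta,[a,b]_{\mathbb{T}})$,
$$
\mathbf{D}\bigl(S(\alpha f,D,\delta),\alpha\odot\tilde{A}_{f}\bigr)
=|\alpha|\,\mathbf{D}\bigl(S(f,D,\delta),\tilde{A}_{f}\bigr)<\epsilon,
$$
so $\alpha f\in\mathcal{FHK}_{[a,b]_{\mathbb{T}}}$ with integral $\alpha\odot\tilde{A}_{f}$.

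Next I would prove additivity of $f+g$. Given $\epsilon>0$, pick $\delta^{1},\delta^{2}\in\Gamma(\Delta,[a,b]_{\mathbb{T}})$ witnessing integrability of $f$ and of $g$, each with tolerance $\epsilon/2$, and put $\delta_{L}:=\min\{\delta^{1}_{L},\delta^{2}_{L}\}$, $\delta_{R}:=\min\{\delta^{1}_{R},\delta^{2}_{R}\}$. One checks that $\delta=(\delta_{L},\delta_{R})$ is again a $\Delta$-gauge: the positivity requirements and the conditions $\delta_{L}(a)\geq 0$, $\delta_{R}(b)\geq 0$ are inherited by the minimum, and since $\delta^{1}_{R}(\xi)\geq\mu(\xi)$ and $\delta^{2}_{R}(\xi)\geq\mu(\xi)$ on $[a,b)_{\mathbb{T}}$, the inequality $\delta_{R}(\xi)\geq\mu(\xi)$ survives. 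Any $D\in\mathfrak{D}(\delta,[a,b]_{\mathbb{T}})$ is then simultaneously $\delta^{1}$-fine and $\delta^{2}$-fine, so
\begin{multline*}
\mathbf{D}\bigl(S(f+g,D,\delta),\tilde{A}_{f}+\tilde{A}_{g}\bigr)
\leq\mathbf{D}\bigl(S(f,D,\delta),\tilde{A}_{f}\bigr)+\mathbf{D}\bigl(S(g,D,\delta),\tilde{A}_{g}\bigr)\\
<\frac{\epsilon}{2}+\frac{\epsilon}{2}=\epsilon,
\end{multline*}
and hence $f+g\in\mathcal{FHK}_{[a,b]_{\mathbb{T}}}$ with integral $\tilde{A}_{f}+\tilde{A}_{g}$. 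Applying the first step to $\alpha f$ and to $\beta g$, and then the additivity step to their sum, yields the full statement.

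I do not anticipate a genuine obstacle. The only point that is not a verbatim copy of the classical real-valued argument is the verification that $\min\{\delta^{1}_{R},\delta^{2}_{R}\}$ still dominates the graininess $\mu$; this is exactly why the pointwise minimum is the right object to form, and it is the single place where a careless transcription could fail.
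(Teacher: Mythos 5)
Your proof is correct; the paper itself omits the argument, stating only that it is ``straightforward and left to the reader,'' and yours is exactly the standard argument that is intended: the two metric inequalities for $\mathbf{D}$ under sums and scalar multiples, the intersection (pointwise minimum) of the two $\Delta$-gauges, and the observation that the minimum still dominates the graininess $\mu$. The one genuinely fuzzy-specific subtlety --- that for $\lambda<0$ the level-set endpoints swap, so that one only gets $|\lambda|\,\mathbf{D}(u,v)$ via the $\max$ in the definition of $\mathbf{D}$ --- is handled correctly by your second identity.
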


Follows a Cauchy--Bolzano condition for the FHK $\Delta$-integral.

\begin{theorem}[The Cauchy--Bolzano condition] 
\label{thm3.3} 
Function $f(x)\in\mathcal{FHK}_{[a,b]_{\mathbb{T}}}$ 
if and only if for each $\epsilon>0$ there exists a 
$\delta\in \Gamma(\Delta,[a,b]_{\mathbb{T}})$ such that
$$
\mathbf{D}\left(S(f,D_{1},\delta),
S(f,D_{2},\delta)\right)<\epsilon
$$
for any $D_{1}, D_{2}\in \mathfrak{D}(\delta,[a,b]_{\mathbb{T}})$.
\end{theorem}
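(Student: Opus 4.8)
The plan is to prove the two implications separately: the forward one is immediate from the triangle inequality for the Hausdorff metric $\mathbf{D}$, while the converse will follow the classical Cauchy-criterion pattern, using the completeness of $(\mathbb{R}_{\mathcal{F}},\mathbf{D})$ recorded in Section~\ref{sec:2}.

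For necessity, I would assume $f\in\mathcal{FHK}_{[a,b]_{\mathbb{T}}}$ with integral $\tilde{A}$. Given $\epsilon>0$, take the gauge $\delta\in\Gamma(\Delta,[a,b]_{\mathbb{T}})$ provided by Definition~\ref{defn3.1} for $\epsilon/2$, so that $\mathbf{D}(S(f,D,\delta),\tilde{A})<\epsilon/2$ for every $\delta$-fine $D$. Then for $D_{1},D_{2}\in\mathfrak{D}(\delta,[a,b]_{\mathbb{T}})$ the triangle inequality gives $\mathbf{D}(S(f,D_{1},\delta),S(f,D_{2},\delta))\le\mathbf{D}(S(f,D_{1},\delta),\tilde{A})+\mathbf{D}(\tilde{A},S(f,D_{2},\delta))<\epsilon$.

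For sufficiency, assuming the Cauchy--Bolzano condition, I would pick for each $n\in\mathbb{N}$ a gauge $\delta_{n}\in\Gamma(\Delta,[a,b]_{\mathbb{T}})$ witnessing it for $\epsilon=1/n$, and replace $\delta_{n}$ by the componentwise minimum of $\delta_{1},\dots,\delta_{n}$ — which is again a $\Delta$-gauge, since the defining requirements (positivity, the endpoint conditions, and $\delta_{R}(\xi)\ge\mu(\xi)$) survive finite minimisation — so that the $\delta_{n}$ decrease and every $\delta_{m}$-fine division with $m\ge n$ is $\delta_{n}$-fine. Next, invoking Cousin's lemma for $\Delta$-gauges to obtain a $\delta_{n}$-fine division $D_{n}$ and setting $\tilde{A}_{n}=S(f,D_{n},\delta_{n})$, I would note that for $m>n$ the division $D_{m}$ is $\delta_{n}$-fine, so the Cauchy condition applied to $D_{n},D_{m}$ with gauge $\delta_{n}$ yields $\mathbf{D}(\tilde{A}_{n},\tilde{A}_{m})<1/n$; hence $\{\tilde{A}_{n}\}$ is Cauchy and converges, by completeness, to some $\tilde{A}\in\mathbb{R}_{\mathcal{F}}$. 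To conclude, given $\epsilon>0$ I would choose $N$ with $1/N<\epsilon/2$ and $\mathbf{D}(\tilde{A}_{N},\tilde{A})<\epsilon/2$; then for any $\delta_{N}$-fine $D$ the Cauchy condition (applied to $D$ and $D_{N}$) gives $\mathbf{D}(S(f,D,\delta_{N}),\tilde{A}_{N})<1/N$, so $\mathbf{D}(S(f,D,\delta_{N}),\tilde{A})<\epsilon$, and $\delta_{N}$ serves in Definition~\ref{defn3.1}.

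I expect the only delicate points to be structural facts used along the way rather than the estimates themselves: that $\mathfrak{D}(\delta,[a,b]_{\mathbb{T}})$ is nonempty for every $\Delta$-gauge $\delta$ (Cousin's lemma — this is exactly where the hypothesis $\delta_{R}(\xi)\ge\mu(\xi)$ is needed on a general time scale), and that a finite minimum of $\Delta$-gauges is still a $\Delta$-gauge with refinement preserving $\delta$-fineness of divisions. Both are standard in the time-scale Henstock--Kurzweil setting, and once they are granted the remainder is routine bookkeeping with the triangle inequality and completeness.
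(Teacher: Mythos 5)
Your proposal is correct and follows essentially the same route as the paper's own proof: triangle inequality through the integral value for necessity, and for sufficiency the construction of a decreasing sequence of gauges $\delta_n$ (via finite minima), a Cauchy sequence of Riemann sums $S(f,D_n,\delta_n)$ converging by completeness of $(\mathbb{R}_{\mathcal{F}},\mathbf{D})$, and a final triangle-inequality estimate with $\delta_N$. Your explicit attention to Cousin's lemma and to the closure of $\Delta$-gauges under finite minimisation makes the argument slightly more careful than the paper's, but the substance is identical.
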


\begin{proof}
(Necessity) Let $\epsilon>0$. By hypothesis, there exists 
$\delta\in \Gamma(\Delta, [a,b]_{\mathbb{T}})$ such that
$$
\mathbf{D}\left(S(f,D,\delta), (FHK)\int_{[a,b]_{\mathbb{T}}}f(x)\Delta x\right)
<\frac{\epsilon}{2}
$$
for any $D\in \mathfrak{D}(\delta,[a,b]_{\mathbb{T}})$.
Let $D_{1}, D_{2}\in \mathfrak{D}(\delta,[a,b]_{\mathbb{T}})$. Then,
\begin{equation*}
\begin{split}
&\mathbf{D}\left(S(f,D_{1},\delta), S(f,D_{2},\delta)\right)\\
&\leq \mathbf{D}\left(S(f,D_{1},\delta), (FHK)\int_{[a,b]_{\mathbb{T}}}f(x)\Delta x\right)
+\mathbf{D}\left(S(f,D_{2},\delta), (FHK)\int_{[a,b]_{\mathbb{T}}}f(x)\Delta x\right)\\
&<\frac{\epsilon}{2}+\frac{\epsilon}{2}=\epsilon.
\end{split}
\end{equation*}
(Sufficiency) For each $n$, choose a $\delta_{n}\in \Gamma(\Delta_{n},
[a,b]_{\mathbb{T}})$ such that
$$
\mathbf{D}\left(S(f,D_{1},\delta_{n}), S(f,D_{2},\delta_{n})\right)
<\frac{1}{n}
$$
for any $D_{1}, D_{2}\in \mathfrak{D}(\delta_{n},[a,b]_{\mathbb{T}})$. 
Replacing $\delta_{n}$ by $\bigcap_{j=1}^{n}\delta_{j}=\delta_{n}$,
we may assume that $\delta_{n+1}\subset \delta_{n}$. For each $n$,
fix a $D_{n}\in \mathfrak{D}(\delta_{n},[a,b]_{\mathbb{T}})$. For
$j>n$, we have $\delta_{j}\subset \delta_{n}$, so $D_{j}\in
\mathfrak{D}(\delta_{n},[a,b]_{\mathbb{T}})$. Thus,
$\mathbf{D}\left(S(f,D_{n},\delta_{n}),
S(f,D_{j},\delta_{n})\right)<\frac{1}{n}$
and it follows that $\{S(f,D_{n},\delta_{n})\}$ is a Cauchy
sequence. We denote the limit of $\{S(f,D_{n},\delta_{n})\}$ by
$\tilde{A}$ and let $\epsilon>0$. Choose $N>\frac{2}{\epsilon}$ and
let $D\in \mathfrak{D}(\delta_{N},[a,b]_{\mathbb{T}})$. Then,
\begin{equation*}
\begin{split}
\mathbf{D}\left(S(f,D,\delta_{N}), \tilde{A}\right)
&\leq \mathbf{D}\left(S(f,D,\delta_{N}),
S(f,D_{N},\delta_{N})\right)
+\mathbf{D}\left(S(f,D_{N},\delta_{N}), \tilde{A}\right)\\
&<\frac{1}{N}+\frac{1}{N}\\
&<\epsilon.
\end{split}
\end{equation*}
Hence, $f(x)\in\mathcal{FHK}_{[a,b]_{\mathbb{T}}}$.
\end{proof}

\begin{theorem}
\label{thm3.4}
Let $c\in (a,b)_{\mathbb{T}}$. If 
$f(x)\in\mathcal{FHK}_{[a,c]_{\mathbb{T}}}
\bigcap\mathcal{FHK}_{[c,b]_{\mathbb{T}}}$, then 
$$
f(x)\in\mathcal{FHK}_{[a,b]_{\mathbb{T}}}
$$ 
with
$$
(FHK)\int_{[a,b]_{\mathbb{T}}}f(x)\Delta x
=(FHK)\int_{[a,c]_{\mathbb{T}}}f(x)\Delta x
+(FHK)\int_{[c,b]_{\mathbb{T}}}f(x)\Delta x.
$$
\end{theorem}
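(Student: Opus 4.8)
The plan is to show $f\in\mathcal{FHK}_{[a,b]_{\mathbb{T}}}$ with integral $\tilde{A}_{1}+\tilde{A}_{2}$, where $\tilde{A}_{1}=(FHK)\int_{[a,c]_{\mathbb{T}}}f(x)\Delta x$ and $\tilde{A}_{2}=(FHK)\int_{[c,b]_{\mathbb{T}}}f(x)\Delta x$; uniqueness (Theorem~\ref{thm3.1}) then gives the displayed identity. Given $\epsilon>0$, I first use Definition~\ref{defn3.1} on each piece to pick $\delta_{1}\in\Gamma(\Delta,[a,c]_{\mathbb{T}})$ and $\delta_{2}\in\Gamma(\Delta,[c,b]_{\mathbb{T}})$ realising $\epsilon/2$ for $\tilde{A}_{1}$ and $\tilde{A}_{2}$, respectively. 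The core step is to splice them into one $\Delta$-gauge $\delta$ on $[a,b]_{\mathbb{T}}$ that forces $c$ to be one of the division points $x_{i}$ of every $\delta$-fine division: take $\delta_{L}=\delta_{1,L}$ and $\delta_{R}(\xi)=\min\{\delta_{1,R}(\xi),c-\xi\}$ for $\xi\in[a,c)_{\mathbb{T}}$; $\delta_{L}(c)=\delta_{1,L}(c)$ and $\delta_{R}(\xi)=\delta_{2,R}(\xi)$ for $\xi\in[c,b]_{\mathbb{T}}$; and $\delta_{L}(\xi)=\min\{\delta_{2,L}(\xi),\xi-c\}$ for $\xi\in(c,b]_{\mathbb{T}}$. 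That $\delta$ is a legitimate $\Delta$-gauge reduces to checking $\delta_{R}(\xi)\ge\mu(\xi)$ on $[a,c)_{\mathbb{T}}$, which holds because $\xi<c$ and $c\in\mathbb{T}$ force $\sigma(\xi)\le c$, hence $\mu(\xi)=\sigma(\xi)-\xi\le c-\xi$.

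Next I take any $D=\{([x_{i-1},x_{i}]_{\mathbb{T}},\xi_{i})\}_{i=1}^{n}\in\mathfrak{D}(\delta,[a,b]_{\mathbb{T}})$ and observe that a tag interval can contain $c$ in its interior only if its tag is $c$: indeed, $\xi_{i}<c$ gives $x_{i}<\xi_{i}+\delta_{R}(\xi_{i})\le c$, while $\xi_{i}>c$ gives $x_{i-1}>\xi_{i}-\delta_{L}(\xi_{i})\ge c$. If the pair containing $c$ has tag $c$ with $x_{i-1}<c<x_{i}$, I split it into $([x_{i-1},c]_{\mathbb{T}},c)$ and $([c,x_{i}]_{\mathbb{T}},c)$; both remain $\delta$-fine and $S(f,D,\delta)$ is unchanged because $f(c)(c-x_{i-1})+f(c)(x_{i}-c)=f(c)(x_{i}-x_{i-1})$. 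Hence I may assume $c=x_{k}$ for some $k$, and then $D_{1}=\{([x_{i-1},x_{i}]_{\mathbb{T}},\xi_{i})\}_{i\le k}$ and $D_{2}=\{([x_{i-1},x_{i}]_{\mathbb{T}},\xi_{i})\}_{i>k}$ are, by the construction of $\delta$, a $\delta_{1}$-fine division of $[a,c]_{\mathbb{T}}$ and a $\delta_{2}$-fine division of $[c,b]_{\mathbb{T}}$, with $S(f,D,\delta)=S(f,D_{1},\delta_{1})+S(f,D_{2},\delta_{2})$.

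It then remains to use the inequality $\mathbf{D}(u_{1}+u_{2},v_{1}+v_{2})\le\mathbf{D}(u_{1},v_{1})+\mathbf{D}(u_{2},v_{2})$ for fuzzy numbers, immediate from the $\alpha$-level formula for $\mathbf{D}$ together with $[u_{1}+u_{2}]^{\alpha}=[u_{1}]^{\alpha}+[u_{2}]^{\alpha}$, to conclude
\begin{multline*}
\mathbf{D}\left(S(f,D,\delta),\tilde{A}_{1}+\tilde{A}_{2}\right)\\
\le\mathbf{D}\left(S(f,D_{1},\delta_{1}),\tilde{A}_{1}\right)+\mathbf{D}\left(S(f,D_{2},\delta_{2}),\tilde{A}_{2}\right)<\frac{\epsilon}{2}+\frac{\epsilon}{2}=\epsilon,
\end{multline*}
which is exactly the estimate required by Definition~\ref{defn3.1}. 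I expect the delicate point to be the gauge-splicing step: one must be careful that $\delta$ still respects $\delta_{R}\ge\mu$ near $c$ and that restricting a $\delta$-fine division of $[a,b]_{\mathbb{T}}$ to the two subintervals really yields $\delta_{1}$- and $\delta_{2}$-fine divisions at the shared endpoint $c$, where the one-sided conventions for $\Delta$-gauges (allowing $\delta_{L}(a)$ and $\delta_{R}(b)$ to vanish) need attention.
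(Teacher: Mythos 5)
Your proposal is correct and follows essentially the same route as the paper: splice the two gauges at $c$ so that $c$ is forced to be a tag/division point of every fine division, split the Riemann sum at $c$ (doubling the tag $c$ if necessary, which leaves the sum unchanged), and apply the triangle inequality for $\mathbf{D}$. Your gauge near $c$ (using $c-\xi$ and $\xi-c$ directly, justified by $\sigma(\xi)\le c$ for $\xi<c$) is a slightly cleaner variant of the paper's $\max\{\mu(\xi),\frac{c-\xi}{2}\}$ construction, and you rightly flag the endpoint convention for $\delta_{R}$ at $c$, an issue the paper's own proof shares.
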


\begin{proof}
Let $\epsilon>0$. By assumption, there exist $\Delta$-gauges
$$
\delta^{i}(\xi)=(\delta^{i}_{L}(\xi),\delta^{i}_{R}(\xi)),
\quad i = 1, 2, 
$$
such that
\begin{equation*}
\begin{split}
&\mathbf{D}\left(S(f,D_{1},\delta^{1}),
\  (FHK)\int_{[a,c]_{\mathbb{T}}}f(x)\Delta x\right)<\epsilon,\\
&\mathbf{D}\left(S(f,D_{2},\delta^{2}),
\ (FHK)\int_{[c,b]_{\mathbb{T}}}f(x)\Delta x\right)<\epsilon,
\end{split}
\end{equation*}
respectively for any $D_{1}\in \mathfrak{D}(\delta^{1},[a,c]_{\mathbb{T}})$, $D_{1}={\{([x^{1}_{k-1},x^{1}_{\kappa}]_{\mathbb{T}},\xi^{1}_{\kappa})\}}^{n}_{k=1}$,
and for any $D_{2}\in \mathfrak{D}(\delta^{2},[c,b]_{\mathbb{T}})$,
$D_{2}={\{([x^{2}_{k-1},x^{2}_{\kappa}]_{\mathbb{T}},\xi^{2}_{\kappa})\}}^{m}_{k=1}$. 
We define $\delta(\xi)=(\delta_{L}(\xi),\delta_{R}(\xi))$
on $[a,b]_{\mathbb{T}}$ by setting
\begin{equation*}
\delta_{L}(\xi)=
\begin{cases}
\delta^{1}_{L}(\xi),& \text{if $\xi\in [a,c)_{\mathbb{T}}$},\\
\delta^{1}_{L}(\xi),& \text{if $\xi=c=\rho(c)$},\\
\min\left\{\delta^{1}_{L}(\xi),\frac{\eta(c)}{2}\right\},& \text{if $\xi=c>\rho(c)$},\\
\min\left\{\delta^{2}_{L}(\xi),\frac{\xi-c}{2}\right\},& \text{if $\xi\in (c,b]_{\mathbb{T}}$},
\end{cases}
\end{equation*}
and
\begin{equation*}
\delta_{R}(\xi)=
\begin{cases}
\min\left\{\delta^{1}_{R}(\xi),
\max\left\{\mu(\xi),\frac{c-\xi}{2}\right\}\right\},
& \text{if $\xi\in [a,c)_{\mathbb{T}}$},\\
\min\{\delta^{2}_{R}(\xi)\},
& \text{if $\xi\in [c,b]_{\mathbb{T}}$}.
\end{cases}
\end{equation*}
Now, let $D\in \mathfrak{D}(\delta,[a,b]_{\mathbb{T}})$, 
$D={\{([x_{k-1},x_{k}]_{\mathbb{T}},\xi_{k})\}}^{p}_{k=1}$. 
It follows that
\begin{itemize}
\item[(i)] either $c=\xi_{q}$ and $t_{q}>c$;

\item[(ii)] \ or $\xi_{q}=\rho(c)<c$ and $t_{q}=c$. 
\end{itemize}
The case (ii) is straightforward. For (i), one has
\begin{equation*}
\begin{split}
\mathbf{D}&\left(S(f,D,\delta),\  (FHK)\int_{[a,c]_{\mathbb{T}}}
f(x)\Delta x+(FHK)\int_{[c,b]_{\mathbb{T}}}f(x)\Delta x\right)\\
&=\mathbf{D}\left(\sum_{k=1}^{p}f(\xi_{k})\Delta x_{k},
\  (FHK)\int_{[a,c]_{\mathbb{T}}}
f(x)\Delta x+(FHK)\int_{[c,b]_{\mathbb{T}}}f(x)\Delta x\right)\\
&\leq \mathbf{D}\left(\sum_{k=1}^{q-1}f(\xi_{k})\Delta x_{k}+f(c)(c-t_{q-1}),
\  (FHK)\int_{[a,c]_{\mathbb{T}}}
f(x)\Delta x\right)\\
&\quad +\mathbf{D}\left(\sum_{k=q+1}^{p}f(\xi_{k})\Delta x_{k}+f(c)(t_{q}-c),
\  (FHK)\int_{[c,b]_{\mathbb{T}}}
f(x)\Delta x\right)\\
&<\epsilon+\epsilon=2\epsilon.
\end{split}
\end{equation*}
The intended result follows.
\end{proof}

\begin{corollary}
\label{thm3.5} 
If $f\in\mathcal{FHK}_{[a,b]_{\mathbb{T}}}$, then
$f\in\mathcal{FHK}_{[r,s]_{\mathbb{T}}}$ for any
$[r,s]_{\mathbb{T}}\subset[a,b]_{\mathbb{T}}$.
\end{corollary}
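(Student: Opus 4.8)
The plan is to verify the Cauchy--Bolzano condition of Theorem~\ref{thm3.3} on the subinterval $[r,s]_{\mathbb{T}}$ and then invoke Theorem~\ref{thm3.3} in the other direction. The mechanism is division splicing: any pair of $\delta$-fine HK divisions of $[r,s]_{\mathbb{T}}$ can be completed to a pair of $\delta$-fine HK divisions of $[a,b]_{\mathbb{T}}$ by prepending a single fixed $\delta$-fine division of $[a,r]_{\mathbb{T}}$ and appending a single fixed $\delta$-fine division of $[s,b]_{\mathbb{T}}$; the two completed integral sums then differ only in their $[r,s]_{\mathbb{T}}$-parts. Since $\mathbf{D}$ is invariant under adding one and the same fuzzy number $w$ to both of its arguments --- because $[u_{i}+w]^{\alpha}=[u_{i}]^{\alpha}+[w]^{\alpha}$ gives $\underline{(u_{i}+w)^{\alpha}}=\underline{u_{i}^{\alpha}}+\underline{w^{\alpha}}$ and $\overline{(u_{i}+w)^{\alpha}}=\overline{u_{i}^{\alpha}}+\overline{w^{\alpha}}$, so the two corresponding differences in the definition of $\mathbf{D}$ are unchanged --- the estimate obtained on $[a,b]_{\mathbb{T}}$ transfers verbatim to $[r,s]_{\mathbb{T}}$.

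In detail: let $\epsilon>0$. By Theorem~\ref{thm3.3} there is $\delta\in\Gamma(\Delta,[a,b]_{\mathbb{T}})$ with $\mathbf{D}\bigl(S(f,D_{1},\delta),S(f,D_{2},\delta)\bigr)<\epsilon$ for all $D_{1},D_{2}\in\mathfrak{D}(\delta,[a,b]_{\mathbb{T}})$. First one checks that the restriction of $\delta$ to each of $[a,r]_{\mathbb{T}}$, $[r,s]_{\mathbb{T}}$ and $[s,b]_{\mathbb{T}}$ is again a $\Delta$-gauge there: positivity of $\delta_{L}$ and $\delta_{R}$ on the relevant sets and the inequality $\delta_{R}(\xi)\geq\mu(\xi)$ are inherited from $[a,b]_{\mathbb{T}}$, while the one-sided endpoint conditions $\delta_{L}(\cdot)\geq 0$, $\delta_{R}(\cdot)\geq 0$ hold automatically. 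Since $\mathfrak{D}(\delta,[c,d]_{\mathbb{T}})\neq\emptyset$ for every $\Delta$-gauge on a time-scale interval (a fact already used in the proof of Theorem~\ref{thm3.3}), fix $P\in\mathfrak{D}(\delta,[a,r]_{\mathbb{T}})$ and $Q\in\mathfrak{D}(\delta,[s,b]_{\mathbb{T}})$, with $P$ (resp.\ $Q$) taken empty if $r=a$ (resp.\ $s=b$). For arbitrary $D_{1}',D_{2}'\in\mathfrak{D}(\delta,[r,s]_{\mathbb{T}})$ the spliced sets $D_{i}:=P\cup D_{i}'\cup Q$ lie in $\mathfrak{D}(\delta,[a,b]_{\mathbb{T}})$ and $S(f,D_{i},\delta)=S(f,P,\delta)+S(f,D_{i}',\delta)+S(f,Q,\delta)$. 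Hence, by the translation invariance of $\mathbf{D}$ noted above,
$$
\mathbf{D}\bigl(S(f,D_{1}',\delta),S(f,D_{2}',\delta)\bigr)
=\mathbf{D}\bigl(S(f,D_{1},\delta),S(f,D_{2},\delta)\bigr)<\epsilon,
$$
so the Cauchy--Bolzano condition holds on $[r,s]_{\mathbb{T}}$, and Theorem~\ref{thm3.3} yields $f\in\mathcal{FHK}_{[r,s]_{\mathbb{T}}}$.

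I expect the only part requiring real care to be the bookkeeping that a restricted $\Delta$-gauge is still a $\Delta$-gauge --- in particular the condition $\delta_{R}(\xi)\geq\mu(\xi)$ and the one-sided endpoint requirements --- together with the appeal to nonemptiness of $\mathfrak{D}(\delta,\cdot)$ on time-scale intervals; the translation invariance of $\mathbf{D}$ is immediate from the definitions. If one prefers to avoid handling $[a,r]_{\mathbb{T}}$ and $[s,b]_{\mathbb{T}}$ at once, the same argument can be run twice, first passing from $[a,b]_{\mathbb{T}}$ to $[a,s]_{\mathbb{T}}$ and then from $[a,s]_{\mathbb{T}}$ to $[r,s]_{\mathbb{T}}$, but the one-step version above is equally short.
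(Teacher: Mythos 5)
Your proof is correct and is the standard argument: the paper states this result as a corollary without giving a proof, and the intended route is exactly the one you take --- the Cauchy--Bolzano criterion of Theorem~\ref{thm3.3} applied on $[r,s]_{\mathbb{T}}$, with arbitrary $\delta$-fine divisions of $[r,s]_{\mathbb{T}}$ spliced between fixed $\delta$-fine divisions of $[a,r]_{\mathbb{T}}$ and $[s,b]_{\mathbb{T}}$, and the translation invariance of $\mathbf{D}$ cancelling the common terms. Your attention to the two points that actually need checking --- that the restriction of a $\Delta$-gauge is again a $\Delta$-gauge (including $\delta_{R}(\xi)\geq\mu(\xi)$ and the endpoint conditions) and the Cousin-type nonemptiness of $\mathfrak{D}(\delta,\cdot)$, which the paper already uses implicitly in the proof of Theorem~\ref{thm3.3} --- is appropriate.
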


\begin{definition}[See \cite{PT}]
\label{defn3.2} 
Let $\mathbb{T'}\subset\mathbb{T}$. We say
$\mathbb{T'}$ has delta measure zero if it has Lebesgue measure zero
and contains no right-scattered points. A property $\mathcal{P}$ is
said to hold $\Delta$ a.e. on $\mathbb{T}$ if there exists
$\mathbb{T'}$ of measure zero such that $\mathcal{P}$ holds for
every $t\in \mathbb{T} \setminus \mathbb{T'}$.
\end{definition}

\begin{theorem}
\label{thm3.6}
Let $f(x)=g(x)$ $\Delta$ a.e. on $[a,b]_{\mathbb{T}}$. 
If $f(x)\in\mathcal{FHK}_{[a,b]_{\mathbb{T}}}$,
then so $g(x)$. Moreover,
$$
(FHK)\int_{[a,b]_{\mathbb{T}}}f(x)\Delta x
=(FHK)\int_{[a,b]_{\mathbb{T}}}g(x)\Delta x.
$$
\end{theorem}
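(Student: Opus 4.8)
The plan is to reduce the statement to an estimate on Riemann sums, using the definition of ``$\Delta$ a.e.'' in full: its Lebesgue-null part supplies a small open cover of the exceptional set, while the absence of right-scattered points is exactly what lets a $\Delta$-gauge be shrunk there (recall that a $\Delta$-gauge must satisfy $\delta_{R}(\xi)\geq\mu(\xi)$).

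First I would set $E=\{x\in[a,b]_{\mathbb{T}}:f(x)\neq g(x)\}$. By hypothesis $E$ has delta measure zero, hence it is Lebesgue null and $\mu(\xi)=0$ for every $\xi\in E$. Since the values of $f$ and $g$ are fuzzy numbers, $\mathbf{D}(f(\xi),g(\xi))<\infty$ for each $\xi$, so $E=\bigcup_{n=1}^{\infty}E_{n}$ with $E_{n}=\{\xi\in E:n-1\leq\mathbf{D}(f(\xi),g(\xi))<n\}$. Each $E_{n}$ has Lebesgue outer measure zero, so by outer regularity I choose open sets $G_{n}\subset\mathbb{R}$ with $E_{n}\subset G_{n}$ and $|G_{n}|<\epsilon/(n\,2^{n})$.

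Next, fix $\epsilon>0$ and let $\delta^{0}\in\Gamma(\Delta,[a,b]_{\mathbb{T}})$ be a gauge witnessing $f\in\mathcal{FHK}_{[a,b]_{\mathbb{T}}}$, i.e., $\mathbf{D}(S(f,D,\delta^{0}),\tilde{A})<\epsilon$ for every $\delta^{0}$-fine $D$, where $\tilde{A}=(FHK)\int_{[a,b]_{\mathbb{T}}}f(x)\Delta x$. I would then build a $\Delta$-gauge $\delta$ by putting $\delta(\xi)=\delta^{0}(\xi)$ for $\xi\notin E$, and for $\xi\in E_{n}$ choosing $\delta_{L}(\xi)\le\delta_{L}^{0}(\xi)$ and $\delta_{R}(\xi)\le\delta_{R}^{0}(\xi)$ small (positive on $[a,b)_{\mathbb{T}}$) so that the interval $(\xi-\delta_{L}(\xi),\xi+\delta_{R}(\xi))$ is contained in $G_{n}$; this is possible precisely because $G_{n}$ is open around $\xi$ and the constraint $\delta_{R}(\xi)\geq\mu(\xi)=0$ imposes no lower bound there. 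One checks at once that $\delta\in\Gamma(\Delta,[a,b]_{\mathbb{T}})$ and that every $\delta$-fine division of $[a,b]_{\mathbb{T}}$ is $\delta^{0}$-fine.

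Finally, let $D=\{([x_{i-1},x_{i}]_{\mathbb{T}},\xi_{i})\}_{i=1}^{p}\in\mathfrak{D}(\delta,[a,b]_{\mathbb{T}})$. Since $f=g$ off $E$, the standard properties of $\mathbf{D}$ (translation invariance, subadditivity under fuzzy addition, and $\mathbf{D}(\lambda\odot u,\lambda\odot v)=|\lambda|\,\mathbf{D}(u,v)$) give
\[
\mathbf{D}\bigl(S(g,D,\delta),S(f,D,\delta)\bigr)\leq\sum_{\xi_{i}\in E}\mathbf{D}(f(\xi_{i}),g(\xi_{i}))\,\Delta x_{i}=\sum_{n=1}^{\infty}\ \sum_{\xi_{i}\in E_{n}}\mathbf{D}(f(\xi_{i}),g(\xi_{i}))\,\Delta x_{i}.
\]
For each fixed $n$, a tag $\xi_{i}\in E_{n}$ forces $x_{i-1},x_{i}\in(\xi_{i}-\delta_{L}(\xi_{i}),\xi_{i}+\delta_{R}(\xi_{i}))\subset G_{n}$, hence the closed real interval $[x_{i-1},x_{i}]\subset G_{n}$; as the subintervals of $D$ are non-overlapping, $\sum_{\xi_{i}\in E_{n}}\Delta x_{i}\leq|G_{n}|$. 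Therefore the last double sum is at most $\sum_{n}n|G_{n}|<\sum_{n}\epsilon\,2^{-n}=\epsilon$. Combining with $\mathbf{D}(S(f,D,\delta),\tilde{A})<\epsilon$ (valid because $D$ is $\delta^{0}$-fine) yields $\mathbf{D}(S(g,D,\delta),\tilde{A})<2\epsilon$ for every $\delta$-fine $D$, and since $\epsilon$ is arbitrary we conclude $g\in\mathcal{FHK}_{[a,b]_{\mathbb{T}}}$ with $(FHK)\int_{[a,b]_{\mathbb{T}}}g(x)\Delta x=\tilde{A}=(FHK)\int_{[a,b]_{\mathbb{T}}}f(x)\Delta x$. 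The step I expect to be the main obstacle is the gauge construction on $E$: it is exactly here that both halves of ``delta measure zero'' enter, the Lebesgue-null part through the covers $G_{n}$ and the absence of right-scattered points through the freedom to make $\delta_{R}$ arbitrarily small on $E$; the only subinterval endpoints of $[a,b]_{\mathbb{T}}$, namely $a$ and $b$, need only the usual boundary bookkeeping, as each of them can tag at most one subinterval.
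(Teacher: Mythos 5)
Your proof is correct and follows essentially the same route as the paper's: decompose the exceptional set $E=\bigcup_n E_n$ by the size of $\mathbf{D}(f(\xi),g(\xi))$, cover each $E_n$ by open sets of total length $\epsilon/(n\,2^{n})$, refine the gauge for $f$ so that tags in $E_n$ force their subintervals into the cover, and bound $\mathbf{D}(S(g,D,\delta),S(f,D,\delta))$ by $\sum_n n\,|G_n|<\epsilon$. If anything, your write-up is more careful than the paper's at the gauge-construction step, where you make explicit that the absence of right-scattered points in $E$ is what permits shrinking $\delta_R$ there.
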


\begin{proof}
Let $\epsilon>0$. Then there exists a 
$\delta\in \Gamma(\Delta, [a,b]_{\mathbb{T}})$ such that
$$
\mathbf{D}\left(S(f,D,\delta), 
(FHK)\int_{[a,b]_{\mathbb{T}}}f(x)\Delta x\right)<\epsilon
$$
for any $D\in \mathfrak{D}(\delta,[a,b]_{\mathbb{T}})$, 
$D={\{([x_{i-1},x_{i}]_{\mathbb{T}},\xi_{i})\}}$. 
Set $E=\sum_{j=1}^{\infty}E_{j}$, where
$$
E_{j}=\Big\{x:j-1<\mathbf{D}(f(x),g(x))\leq j,\
 \ t\in[a,b]_{\mathbb{T}}\Big\}_{j=1}^{\infty}.
$$
For each $j$, there exists  $F_{j}$ consisting of a collection of
open intervals  with total length less than $\epsilon
\cdot2^{-j}\cdot j^{-1}$, such that $E_{j}\subset F_{j}$. Define
\begin{equation*}
\delta(\xi)=
\begin{cases}
(\delta^{0}_{L}(\xi),\delta^{0}_{R}(\xi)),
& \text{ if } \xi\in [a,b]_{\mathbb{T}}\backslash E,\\
(\delta^{1}_{L}(\xi),\delta^{1}_{R}(\xi)),
& \text{ if }  \xi\in E_{j} \text{ satisfies } (\xi-\delta^{1}_{L}(\xi),\xi
+\delta^{1}_{R}(\xi))_{\mathbb{T}}\subset F_{j}.
\end{cases}
\end{equation*}
Then, for any $D\in \mathfrak{D}(\delta,[a,b]_{\mathbb{T}})$,
$D={\{([x_{i-1},x_{i}]_{\mathbb{T}},\xi_{i})\}}$, one has
\begin{equation*}
\begin{split}
\mathbf{D}&\left(S(g,D,\delta), \ (FHK)\int_{[a,b]_{\mathbb{T}}}f(x)\Delta x\right)\\
&= \mathbf{D}\left(\sum_{\xi_{i}\in [a,b]_{\mathbb{T}}}g(\xi_{i})\Delta x_{i},
\ (FHK)\int_{[a,b]_{\mathbb{T}}}f(x)\Delta x\right) \\
&= \mathbf{D}\left(\sum_{\xi_{i}\in E}g(\xi_{i})\Delta x_{i}
+\sum_{\xi_{i}\in [a,b]_{\mathbb{T}}
\backslash E}g(\xi_{i})\Delta x_{i}, \ (FHK)\int_{[a,b]_{\mathbb{T}}}f(x)\Delta x\right)\\
&= \mathbf{D}\left(\sum_{\xi_{i}\in E}g(\xi_{i})\Delta x_{i}
+\sum_{\xi_{i}\in [a,b]_{\mathbb{T}}\backslash E}f(\xi_{i})\Delta x_{i}
+\sum_{\xi_{i}\in E}f(\xi_{i})\Delta x_{i},\right.\\
&\qquad \left.(FHK)\int_{[a,b]_{\mathbb{T}}}f(x)\Delta x
+\sum_{\xi_{i}\in E}f(\xi_{i})\Delta x_{i}\right).
\end{split}
\end{equation*}
Therefore,
\begin{equation*}
\begin{split}
\mathbf{D}&\left(S(g,D,\delta), \ (FHK)\int_{[a,b]_{\mathbb{T}}}f(x)\Delta x\right)\\
&\leq \mathbf{D}\left(\sum_{\xi_{i}\in [a,b]_{\mathbb{T}}}
f(\xi_{i})\Delta x_{i}, \ (FHK)\int_{[a,b]_{\mathbb{T}}}f(x)\Delta x\right)\\
&\quad +\mathbf{D}\left(\sum_{\xi_{i}\in E}g(\xi_{i})\Delta x_{i},
\ \sum_{\xi_{i}\in E}f(\xi_{i})\Delta x_{i}\right)\\
&\leq \epsilon+\sum^{\infty}_{j=1}\sum_{\xi_{i}\in E_{j}}
\mathbf{D}\left(f(\xi_{i}), g(\xi_{i})\right)\Delta x_{i}\\
&\leq 2\epsilon.
\end{split}
\end{equation*}
The proof is complete.
\end{proof}

\begin{theorem}[See \cite{PT}]
\label{thm3.7} 
Let $[a,b]_{\mathbb{T}}$ be given. Assume
\begin{enumerate}
\item[(1)] $\lim _{n\rightarrow \infty}f_{n}(x)= f(x)$ holds $\Delta$ a.e.;

\item[(2)] $G(x)\leq f_{n}(x)\leq H(x)$ holds $\Delta$ a.e.;

\item[(3)] $f_{n}(x),\ G(x),\ H(x)\in \mathcal{HK}_{[a,b]_{\mathbb{T}}}$.
\end{enumerate}
Then $f(x)\in \mathcal{HK}_{[a,b]_{\mathbb{T}}}$. Moreover,
$$
\lim_{n\rightarrow\infty}(HK)\int_{[a,b]_{\mathbb{T}}}f_{n}(x)\Delta x
=(HK)\int_{[a,b]_{\mathbb{T}}}f(x)\Delta x.
$$
\end{theorem}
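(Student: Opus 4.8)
The plan is to deduce this dominated convergence statement from the Lebesgue $\Delta$-integral theory of \cite{PT}, using two facts from that theory: every HK $\Delta$-integrable function on $[a,b]_{\mathbb{T}}$ is $\Delta$-measurable, and a nonnegative HK $\Delta$-integrable function is Lebesgue $\Delta$-integrable, with the two integrals equal. First I would normalize by subtracting the lower bound: put $W:=H-G$ and $u_{n}:=f_{n}-G$. By linearity of the HK $\Delta$-integral, $W$ and every $u_{n}$ belong to $\mathcal{HK}_{[a,b]_{\mathbb{T}}}$; moreover $0\leq u_{n}\leq W$ and $W\geq 0$ hold $\Delta$ a.e., and $u_{n}\to u:=f-G$ holds $\Delta$ a.e.

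Second, I would promote these functions to Lebesgue $\Delta$-integrable ones. Since $W\geq 0$ and $W\in\mathcal{HK}_{[a,b]_{\mathbb{T}}}$, $W$ is Lebesgue $\Delta$-integrable; each $u_{n}$ is $\Delta$-measurable, being a difference of HK $\Delta$-integrable functions, and is squeezed $\Delta$ a.e.\ between $0$ and $W$, so $u_{n}$ is Lebesgue $\Delta$-integrable and $(HK)\int_{[a,b]_{\mathbb{T}}}u_{n}\,\Delta x$ coincides with its Lebesgue $\Delta$-integral. Using Theorem~\ref{thm3.6} and Definition~\ref{defn3.2}, I may redefine the $u_{n}$ and $W$ on a set of delta measure zero so that $0\leq u_{n}\leq W$ and $u_{n}\to u$ hold \emph{everywhere} on $[a,b]_{\mathbb{T}}$ without affecting any $\Delta$-integral.

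Third, I would apply the Lebesgue dominated convergence theorem for the $\Delta$-integral to the sequence $\{u_{n}\}$ dominated by the Lebesgue $\Delta$-integrable function $W$: this gives that $u$ is Lebesgue $\Delta$-integrable and $\int_{[a,b]_{\mathbb{T}}}u_{n}\,\Delta x\to\int_{[a,b]_{\mathbb{T}}}u\,\Delta x$. In particular $u\in\mathcal{HK}_{[a,b]_{\mathbb{T}}}$, whence $f=u+G\in\mathcal{HK}_{[a,b]_{\mathbb{T}}}$, and
\begin{multline*}
(HK)\int_{[a,b]_{\mathbb{T}}}f_{n}(x)\,\Delta x
=\int_{[a,b]_{\mathbb{T}}}u_{n}(x)\,\Delta x+(HK)\int_{[a,b]_{\mathbb{T}}}G(x)\,\Delta x\\
\longrightarrow\int_{[a,b]_{\mathbb{T}}}u(x)\,\Delta x+(HK)\int_{[a,b]_{\mathbb{T}}}G(x)\,\Delta x\\
=(HK)\int_{[a,b]_{\mathbb{T}}}f(x)\,\Delta x,
\end{multline*}
which is the assertion.

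I expect the main obstacle to be the transition from HK $\Delta$-integrability to Lebesgue $\Delta$-integrability --- concretely, the $\Delta$-measurability of HK $\Delta$-integrable functions and the identification of the HK and Lebesgue $\Delta$-integrals on nonnegative integrands --- together with the correct handling of ``$\Delta$ a.e.'' on a time scale, where a negligible set must contain no right-scattered points (Definition~\ref{defn3.2}). If one insisted on a proof confined to the Henstock--Kurzweil framework, the difficulty would move to showing that $\inf_{k\geq n}u_{k}$ and $\sup_{k\geq n}u_{k}$ are HK $\Delta$-integrable, so as to squeeze $u$ between two monotone sequences and invoke monotone convergence; that lattice property fails for arbitrary HK-integrable families and is exactly what the domination $0\leq u_{n}\leq W$ is there to repair.
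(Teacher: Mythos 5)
This theorem is stated in the paper with the attribution ``See \cite{PT}'' and no proof is given here at all --- it is an imported result of Peterson and Thompson --- so there is no in-paper argument to compare yours against. On its own terms, your proposal is correct and follows the standard route by which such dominated convergence theorems for the HK $\Delta$-integral are established: reduce to the nonnegative case by subtracting $G$, pass to the Lebesgue $\Delta$-integral, and invoke the classical dominated convergence theorem for the measure $\mu_{\Delta}$. The three facts you import --- that HK $\Delta$-integrable functions are $\Delta$-measurable, that a nonnegative HK $\Delta$-integrable function is Lebesgue $\Delta$-integrable with the same value, and that Lebesgue $\Delta$-integrability implies HK $\Delta$-integrability --- are exactly the bridge results proved in \cite{PT}, so the argument is not circular relative to its source. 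Two small remarks: the observation that ``$\Delta$ a.e.'' in the sense of Definition~\ref{defn3.2} coincides with $\mu_{\Delta}$-a.e.\ (because right-scattered points carry positive $\Delta$-measure) is the correct and necessary justification for applying the measure-theoretic DCT, and you identify it; and the redefinition of $u_{n}$, $W$ on a $\Delta$-null set is dispensable, since the a.e.\ form of the Lebesgue DCT suffices. Your closing paragraph correctly diagnoses why a purely Henstock--Kurzweil proof (via $\inf_{k\geq n}u_{k}$ and $\sup_{k\geq n}u_{k}$ and monotone convergence) would need the domination to secure integrability of the envelopes; that is indeed the alternative, essentially equivalent, route.
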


\begin{theorem}
\label{thm3.8} 
Function $f(x)\in\mathcal{FHK}_{[a,b]_{\mathbb{T}}}$ if and only if
$\underline{f(x)^{\alpha}}$, $\overline{f(x)^{\alpha}} \in
\mathcal{HK}_{[a,b]_{\mathbb{T}}}$ for all $\alpha\in[0,1]$
uniformly, i.e., the $\Delta$-gauge in Definition~\ref{defn3.1} is
independent of $\alpha$.
\end{theorem}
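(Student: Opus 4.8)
The plan is to prove the two implications of Theorem~\ref{thm3.8} separately, exploiting the fact that the Hausdorff distance $\mathbf{D}$ is by definition a supremum over $\alpha\in[0,1]$ of the ordinary distances between the endpoints of the $\alpha$-level sets. The crucial observation is the identity
$$
\mathbf{D}\bigl(S(f,D,\delta),\tilde{A}\bigr)
=\sup_{\alpha\in[0,1]}\max\left\{\Bigl|S\bigl(\underline{f^{\alpha}},D,\delta\bigr)-\underline{A^{\alpha}}\Bigr|,\ \Bigl|S\bigl(\overline{f^{\alpha}},D,\delta\bigr)-\overline{A^{\alpha}}\Bigr|\right\},
$$
which holds because the Riemann-type sum is linear, so $\underline{[S(f,D,\delta)]^{\alpha}}=S(\underline{f^{\alpha}},D,\delta)$ and likewise for the upper endpoint; here $\tilde A=(FHK)\int f$ and $\underline{A^{\alpha}},\overline{A^{\alpha}}$ denote the endpoints of $[\tilde A]^{\alpha}$.

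For the \emph{necessity} direction, suppose $f\in\mathcal{FHK}_{[a,b]_{\mathbb{T}}}$ with integral $\tilde A$. Given $\epsilon>0$, pick the $\delta\in\Gamma(\Delta,[a,b]_{\mathbb{T}})$ from Definition~\ref{defn3.1}, which does not mention $\alpha$. For every $D\in\mathfrak{D}(\delta,[a,b]_{\mathbb{T}})$ and every $\alpha\in[0,1]$, the displayed identity gives
$$
\Bigl|S\bigl(\underline{f^{\alpha}},D,\delta\bigr)-\underline{A^{\alpha}}\Bigr|
\leq \mathbf{D}\bigl(S(f,D,\delta),\tilde{A}\bigr)<\epsilon,
$$
and the same for the upper endpoint; hence each $\underline{f^{\alpha}}$ and $\overline{f^{\alpha}}$ is HK $\Delta$-integrable (with integral $\underline{A^{\alpha}}$, resp. $\overline{A^{\alpha}}$) via the \emph{same} gauge $\delta$, which is exactly uniformity in $\alpha$.

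For the \emph{sufficiency} direction, assume that for each $\epsilon>0$ there is a single $\delta\in\Gamma(\Delta,[a,b]_{\mathbb{T}})$ such that $|S(\underline{f^{\alpha}},D,\delta)-I_{\alpha}^{-}|<\epsilon/2$ and $|S(\overline{f^{\alpha}},D,\delta)-I_{\alpha}^{+}|<\epsilon/2$ for all $\alpha$ and all $\delta$-fine $D$, where $I_{\alpha}^{-}=(HK)\int\underline{f^{\alpha}}$ and $I_{\alpha}^{+}=(HK)\int\overline{f^{\alpha}}$. The real work is to show that the family $\{[I_{\alpha}^{-},I_{\alpha}^{+}]:\alpha\in[0,1]\}$ defines a fuzzy number: one must verify properties (1)--(3) of Lemma~\ref{lem2.1} (equivalently, conditions (1)--(5) of Lemma~\ref{lem2.2}) for the functions $\alpha\mapsto I_{\alpha}^{-}$, $\alpha\mapsto I_{\alpha}^{+}$. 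Monotonicity of $I_{\alpha}^{-}$ (nondecreasing) and $I_{\alpha}^{+}$ (nonincreasing) and the inequality $I_{1}^{-}\le I_{1}^{+}$ follow by integrating the pointwise inequalities $\underline{f^{\alpha_1}}\le\underline{f^{\alpha_2}}$, etc., using monotonicity of the HK $\Delta$-integral; boundedness follows since $\underline{f^{0}}\le\underline{f^{\alpha}}\le\overline{f^{\alpha}}\le\overline{f^{0}}$ with $\underline{f^{0}},\overline{f^{0}}\in\mathcal{HK}$. The left-continuity in $\alpha$ (property (4) of Lemma~\ref{lem2.2}, and the nested-intersection property (3) of Lemma~\ref{lem2.1}) is where the \emph{uniformity} hypothesis is essential: for a sequence $\alpha_n\uparrow c$ the pointwise monotone convergence $\underline{f^{\alpha_n}}\to\underline{f^{c}}$ (by Lemma~\ref{lem2.2}(4) applied pointwise) together with the sandwich $\underline{f^{0}}\le\underline{f^{\alpha_n}}\le\overline{f^{0}}$ lets us invoke the dominated convergence Theorem~\ref{thm3.7} to get $I_{\alpha_n}^{-}\to I_{c}^{-}$, and similarly $I_{\alpha_n}^{+}\to I_{c}^{+}$; the analogous statement at $0^{+}$ gives property (5). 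Having verified the hypotheses of Lemma~\ref{lem2.2}, there is a fuzzy number $\tilde A$ with $[\tilde A]^{\alpha}=[I_{\alpha}^{-},I_{\alpha}^{+}]$. Finally, for this $\tilde A$ and any $\delta$-fine $D$, the displayed identity yields
$$
\mathbf{D}\bigl(S(f,D,\delta),\tilde{A}\bigr)
=\sup_{\alpha\in[0,1]}\max\left\{\Bigl|S\bigl(\underline{f^{\alpha}},D,\delta\bigr)-I_{\alpha}^{-}\Bigr|,\ \Bigl|S\bigl(\overline{f^{\alpha}},D,\delta\bigr)-I_{\alpha}^{+}\Bigr|\right\}\le\frac{\epsilon}{2}<\epsilon,
$$
so $f\in\mathcal{FHK}_{[a,b]_{\mathbb{T}}}$ with integral $\tilde A$.

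The main obstacle, and the only place beyond bookkeeping, is establishing the left-continuity/nested-intersection conditions so that the endpoint functions actually assemble into a legitimate fuzzy number; this is precisely the step that cannot be done without the uniformity assumption (an $\alpha$-dependent gauge would only give HK $\Delta$-integrability of each section but no control of $\mathbf{D}$, since the supremum over $\alpha$ could blow up), and it is handled by reducing to the dominated convergence Theorem~\ref{thm3.7}. Everything else — linearity of the sums, the supremum identity for $\mathbf{D}$, and the $\epsilon/2$ estimates — is routine.
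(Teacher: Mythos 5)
Your proposal is correct and follows essentially the same route as the paper: the necessity direction via the identity $\underline{[S(f,D,\delta)]^{\alpha}}=S(\underline{f^{\alpha}},D,\delta)$ and the definition of $\mathbf{D}$ as a supremum over $\alpha$, and the sufficiency direction by verifying conditions (1)--(3) of Lemma~\ref{lem2.1} for the intervals $[I_{\alpha}^{-},I_{\alpha}^{+}]$, with the nested-intersection/left-continuity step handled by the dominated convergence Theorem~\ref{thm3.7} using the sandwich by $\underline{f^{0}}$ and $\overline{f^{0}}$. Your writeup is, if anything, slightly more explicit than the paper's about where the uniformity of the gauge in $\alpha$ is actually used.
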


\begin{proof}
(Necessity) Let $\tilde{A}=(FHK)\displaystyle \int_{[a,b]_{\mathbb{T}}}f(x)\Delta x$. 
Given $\epsilon>0$, there exists a 
$$
\delta\in \Gamma(\Delta,
[a,b]_{\mathbb{T}})
$$ 
such that $\mathbf{D}\left(S(f,D,\delta),
\tilde{A}\right)<\epsilon$ for any $D\in
\mathfrak{D}(\delta,[a,b]_{\mathbb{T}})$. Then,
\begin{equation*}
\begin{split}
\sup_{\alpha\in[0,1]}&\max\left\{\left|\underline{\left[
S(f,D,\delta)\right]^{\alpha}}
-\underline{\tilde{A}^{\alpha}}\right|,\left|\overline{\left[S(f,D,\delta)\right]^{\alpha}}
-\overline{\tilde{A}^{\alpha}}\right|\right\}\\
&=\sup_{\alpha\in[0,1]}\max\left\{\left|S(\underline{f^{\alpha}},D,\delta)
-\underline{\tilde{A}^{\alpha}}\right|, \left|S(\overline{f^{\alpha}},D,\delta)
-\overline{\tilde{A}^{\alpha}}\right|\right\} \\
&<\epsilon
\end{split}
\end{equation*}
and
$$
\left|S(\underline{f^{\alpha}},D,\delta)
-\underline{\tilde{A}^{\alpha}}\right|<\epsilon,
\quad
\left|S(\overline{f^{\alpha}},D,\delta)
-\overline{\tilde{A}^{\alpha}}\right|<\epsilon
$$
for any $\alpha\in[0,1]$ and for any $D\in \mathfrak{D}(\delta,[a,b]_{\mathbb{T}})$.
Thus, $\underline{f(x)^{\alpha}}$,
$\overline{f(x)^{\alpha}}\in \mathcal{HK}_{[a,b]_{\mathbb{T}}}$ uniformly
for any $\alpha\in[0,1]$.\\

\noindent (Sufficiency) Let $\epsilon>0$. By assumption, there exists a
$\delta\in \Gamma(\Delta, [a,b]_{\mathbb{T}})$ such that
$$
\left|S(\underline{f^{\alpha}},D,\delta)-\underline{\tilde{A}^{\alpha}}\right|<\epsilon,
\quad
\left|S(\overline{f^{\alpha}},D,\delta)
-\overline{\tilde{A}^{\alpha}}\right|<\epsilon
$$
for any $D\in \mathfrak{D}(\delta,[a,b]_{\mathbb{T}})$ and for any
$\alpha\in[0,1]$, where
$$
\underline{\tilde{A}^{\alpha}}=(FHK)\int_{[a,b]_{\mathbb{T}}}\underline{f^{\alpha}}\Delta x,\ \
\overline{\tilde{A}^{\alpha}}=(FHK)\int_{[a,b]_{\mathbb{T}}}\overline{f^{\alpha}}\Delta x.
$$ 
To prove that $\left\{\left[\underline{\tilde{A}^{\alpha}},
\overline{\tilde{A}^{\alpha}}\right], \alpha\in[0,1]\right\}$
represents a fuzzy number, it is enough to check that
$\left[\underline{\tilde{A}^{\alpha}}, \overline{\tilde{A}^{\alpha}}\right]$ 
satisfies items (1)--(3) of Lemma~\ref{lem2.1}:
\begin{enumerate}
\item[(1)] for $\alpha\in[0,1]$, if $\underline{f^{\alpha}}
\leq \overline{f^{\alpha}}$, then
$\underline{\tilde{A}^{\alpha}}\leq\overline{\tilde{A}^{\alpha}}$,
i.e., the interval $\left[\underline{\tilde{A}^{\alpha}},
\overline{\tilde{A}^{\alpha}}\right]$ is closed.

\item[(2)] $\underline{f^{\alpha}}$ and $\overline{f^{\alpha}}$ 
are, respectively, nondecreasing and nonincreasing functions on $[0,1]$.
For any $0\leq \alpha_{1}\leq \alpha_{2}\leq 1$ one has
\begin{eqnarray*}
(FHK)\int_{[a,b]_{\mathbb{T}}}\underline{f^{\alpha_{1}}}\Delta x
&\leq& (FHK)\int_{[a,b]_{\mathbb{T}}}\underline{f^{\alpha_{2}}}\Delta x\\
&\leq&(FHK)\int_{[a,b]_{\mathbb{T}}}\overline{f^{\alpha_{2}}}\Delta x\\
&\leq& (FHK)\int_{[a,b]_{\mathbb{T}}}\overline{f^{\alpha_{1}}}\Delta x.
\end{eqnarray*}
This implies $\left[\underline{\tilde{A}^{\alpha_{1}}},
\overline{\tilde{A}^{\alpha_{1}}}\right]\supset
\left[\underline{\tilde{A}^{\alpha_{2}}},
\overline{\tilde{A}^{\alpha_{2}}}\right]$.

\item[(3)] For any $\{\alpha_{n}\}$ satisfying
$\alpha_n\leq \alpha_{n+1}$ and   $\alpha_{n}\to\alpha\in (0,1]$,
we have 
$$
\bigcap_{n=1}^{\infty}\left[f\right]^{\alpha_{n}}
=\left[f\right]^{\alpha}, 
$$
that is,
$$
\bigcap_{n=1}^{\infty}\left[\underline{f^{\alpha_{n}}},
\overline{f^{\alpha_{n}}}\right]
=\left[\underline{f^{\alpha}}, \overline{f^{\alpha}}\right],
$$
$\lim_{n\rightarrow\infty}\underline{f^{\alpha_{n}}}
=\underline{f^{\alpha}}$ and
$\lim_{n\rightarrow\infty}\overline{f^{\alpha_{n}}}
=\overline{f^{\alpha}}$. Moreover,
$$
\underline{f^{0}}\leq \underline{f^{\alpha_{n}}}
\leq \underline{f^{1}}, 
\quad \overline{f^{1}}
\leq \overline{f^{\alpha_{n}}}\leq \overline{f^{0}}.
$$
Thanks to Theorem~\ref{thm3.7}, we have $\underline{f^{\alpha}},\
\overline{f^{\alpha}}\in\mathcal{HK}_{[a,b]_{\mathbb{T}}}$ and
\begin{gather*}
\lim_{n\rightarrow\infty}(HK)\int_{[a,b]_{\mathbb{T}}}\underline{f^{\alpha_{n}}}\Delta x
=(HK)\int_{[a,b]_{\mathbb{T}}}\underline{f^{\alpha}}\Delta x,\\
\lim_{n\rightarrow\infty}(HK)\int_{[a,b]_{\mathbb{T}}}\overline{f^{\alpha_{n}}}\Delta x
=(HK)\int_{[a,b]_{\mathbb{T}}}\overline{f^{\alpha}}\Delta x.
\end{gather*}
Consequently,
$$
\bigcap_{n=1}^{\infty}\left[\underline{\tilde{A}^{\alpha_{n}}},
\overline{\tilde{A}^{\alpha_{n}}}\right]
=\left[\underline{\tilde{A}^{\alpha}},
\overline{\tilde{A}^{\alpha}}\right].
$$
\end{enumerate}
Define $\tilde{A}$ by $\left\{\left[\underline{\tilde{A}^{\alpha}},
\overline{\tilde{A}^{\alpha}}\right], \alpha\in[0,1]\right\}$.
Thus, 
$$\mathbf{D}\left(S(f,D,\delta), \tilde{A}\right)<\epsilon
$$ 
for each $D\in \mathfrak{D}(\delta,[a,b]_{\mathbb{T}})$.
\end{proof}

\begin{definition}
\label{defn3.3} 
A sequence $\{f_{n}(x)\}$ of HK $\Delta$-integrable
functions is called uniformly FHK $\Delta$-integrable on
$[a,b]_{\mathbb{T}}$ if for each $\epsilon>0$ there exists a
$\delta\in \Gamma(\Delta, [a,b]_{\mathbb{T}})$ such that
$$
\mathbf{D}\left(S(f_{n},D,\delta),\  
(FHK)\int_{[a,b]_{\mathbb{T}}}f_{n}(x)\Delta x\right)
<\epsilon$$ for any 
$D\in \mathfrak{D}(\delta,[a,b]_{\mathbb{T}})$ and for any $n$.
\end{definition}

\begin{theorem}
\label{thm3.9}
Let $f_{n}(x)\in\mathcal{FHK}_{[a,b]_{\mathbb{T}}}$, $n=1,2,\ldots$, satisfy:
\begin{enumerate}
\item[(1)] $\lim_{n\rightarrow\infty}f_{n}(x)= f(x)$  on $[a,b]_{\mathbb{T}}$;

\item[(2)] $f_{n}(x)$ are uniformly FHK $\Delta$-integrable on $[a,b]_{\mathbb{T}}$.
\end{enumerate}
Then $f(x)\in\mathcal{FHK}_{[a,b]_{\mathbb{T}}}$ and
$$
\lim_{n\rightarrow\infty}(FHK)\int_{[a,b]_{\mathbb{T}}}f_{n}(x)\Delta x
=(FHK)\int_{[a,b]_{\mathbb{T}}}f(x)\Delta x.
$$
\end{theorem}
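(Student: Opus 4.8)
The plan is to first show that the sequence of fuzzy integrals $\tilde{A}_{n}=(FHK)\int_{[a,b]_{\mathbb{T}}}f_{n}(x)\Delta x$ converges in the complete metric space $(\mathbb{R}_{\mathcal{F}},\mathbf{D})$, and then to identify its limit $\tilde{A}$ with the FHK $\Delta$-integral of $f$. For the convergence, given $\epsilon>0$ I would invoke hypothesis~(2) to obtain a single $\delta\in\Gamma(\Delta,[a,b]_{\mathbb{T}})$ with $\mathbf{D}\!\left(S(f_{n},D,\delta),\tilde{A}_{n}\right)<\epsilon$ for all $n$ and all $D\in\mathfrak{D}(\delta,[a,b]_{\mathbb{T}})$. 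Fixing one such division $D=\{([x_{i-1},x_{i}]_{\mathbb{T}},\xi_{i})\}_{i=1}^{p}$ (the set $\mathfrak{D}(\delta,[a,b]_{\mathbb{T}})$ is nonempty, as already used in the proof of Theorem~\ref{thm3.3}), the triangle inequality gives $\mathbf{D}(\tilde{A}_{n},\tilde{A}_{m})\le 2\epsilon+\mathbf{D}\!\left(S(f_{n},D,\delta),S(f_{m},D,\delta)\right)$, and by subadditivity of $\mathbf{D}$ with respect to the fuzzy sum the last term is at most $\sum_{i=1}^{p}\mathbf{D}(f_{n}(\xi_{i}),f_{m}(\xi_{i}))\,\Delta x_{i}$. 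Since $D$ has only finitely many tags and $f_{n}(\xi_{i})\to f(\xi_{i})$ by~(1), this sum falls below $\epsilon(b-a)$ once $m,n$ are large. Hence $\{\tilde{A}_{n}\}$ is Cauchy and has a limit $\tilde{A}\in\mathbb{R}_{\mathcal{F}}$.

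For the identification step, let $\epsilon>0$, choose $\delta$ again from hypothesis~(2), and choose $N$ so large that $\mathbf{D}(\tilde{A}_{n},\tilde{A})<\epsilon$ for $n\ge N$. Take an arbitrary $D=\{([x_{i-1},x_{i}]_{\mathbb{T}},\xi_{i})\}_{i=1}^{p}\in\mathfrak{D}(\delta,[a,b]_{\mathbb{T}})$; because $D$ has finitely many tags, hypothesis~(1) lets me pick $n\ge N$ (depending on this particular $D$) with $\mathbf{D}(f(\xi_{i}),f_{n}(\xi_{i}))<\epsilon/(b-a)$ for every $i$. Then
\begin{equation*}
\mathbf{D}\!\left(S(f,D,\delta),\tilde{A}\right)\le\mathbf{D}\!\left(S(f,D,\delta),S(f_{n},D,\delta)\right)+\mathbf{D}\!\left(S(f_{n},D,\delta),\tilde{A}_{n}\right)+\mathbf{D}\!\left(\tilde{A}_{n},\tilde{A}\right),
\end{equation*}
where the first term is $\le\sum_{i=1}^{p}\mathbf{D}(f(\xi_{i}),f_{n}(\xi_{i}))\,\Delta x_{i}<\epsilon$, the second is $<\epsilon$ by the choice of $\delta$, and the third is $<\epsilon$ since $n\ge N$. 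Thus $\mathbf{D}\!\left(S(f,D,\delta),\tilde{A}\right)<3\epsilon$ for every $D\in\mathfrak{D}(\delta,[a,b]_{\mathbb{T}})$, which is precisely the condition of Definition~\ref{defn3.1}. Hence $f\in\mathcal{FHK}_{[a,b]_{\mathbb{T}}}$ with integral $\tilde{A}$, and by the construction of $\tilde{A}$ we get $\lim_{n\to\infty}(FHK)\int_{[a,b]_{\mathbb{T}}}f_{n}(x)\Delta x=\tilde{A}=(FHK)\int_{[a,b]_{\mathbb{T}}}f(x)\Delta x$.

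The one place requiring care is the order of quantifiers: the gauge $\delta$ must be chosen \emph{before} inspecting any particular division, yet the index $n$ used to compare $S(f,D,\delta)$ with $S(f_{n},D,\delta)$ is permitted to depend on $D$. This is legitimate precisely because, for each fixed division, $S(f_{n},D,\delta)\to S(f,D,\delta)$ is a finite-sum limit that needs no uniformity in $D$, while the two estimates that must be uniform in $D$ — uniform FHK $\Delta$-integrability and $\tilde{A}_{n}\to\tilde{A}$ — only require $n\ge N$ independently of $D$. The auxiliary facts used (subadditivity and positive homogeneity of $\mathbf{D}$ under the fuzzy operations, and nonemptiness of $\mathfrak{D}(\delta,[a,b]_{\mathbb{T}})$) are exactly those already employed in the proofs of Theorems~\ref{thm3.3} and~\ref{thm3.6}.
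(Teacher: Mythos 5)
Your proposal is correct and follows essentially the same route as the paper's own proof: establish that the integrals $\tilde{A}_{n}$ form a Cauchy sequence by comparing Riemann sums over a fixed $\delta$-fine division via uniform integrability and pointwise convergence at the finitely many tags, then identify the limit with the integral of $f$ by letting the comparison index depend on the particular division. Your explicit justification of the quantifier order and of the reduction to finitely many tags only makes more precise what the paper leaves implicit.
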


\begin{proof}
Let $\epsilon>0$. By assumption, there exists a $\delta\in
\Gamma(\Delta, [a,b]_{\mathbb{T}})$ such that
$$
\mathbf{D}\left(S(f_{n},D,\delta), \
(FHK)\int_{[a,b]_{\mathbb{T}}}f_{n}(x)\Delta x\right)<\epsilon
$$
for any $D\in \mathfrak{D}(\delta,[a,b]_{\mathbb{T}})$ and for every $n$.
Fix a $D_{0}\in \mathfrak{D}(\delta,[a,b]_{\mathbb{T}})$. From (1) of
Theorem~\ref{thm3.9}, there exists $N$ such that
$$\mathbf{D}\left(S(f_{n},D_{0},\delta),
\ S(f_{m},D_{0},\delta)\right)<\epsilon$$
for arbitray $n, m>N$. Then,
\begin{equation*}
\begin{split}
\mathbf{D}&\left((FHK)\int_{[a,b]_{\mathbb{T}}}f_{n}(x)\Delta x,
\ (FHK)\int_{[a,b]_{\mathbb{T}}}f_{m}(x)\Delta x\right)\\
&\leq \mathbf{D}\left(S(f_{n},D_{0},\delta),
\ (FHK)\int_{[a,b]_{\mathbb{T}}}f_{n}(x)\Delta x\right)
+\mathbf{D}\left(S(f_{n},D_{0},\delta),
\ S(f_{m},D_{0},\delta)\right)\\
&\quad +\mathbf{D}\left(S(f_{m},D_{0},\delta),
\ (FHK)\int_{[a,b]_{\mathbb{T}}}f_{m}(x)\Delta x\right)\\
&< 3\epsilon
\end{split}
\end{equation*}
for any $n, m>N$ and, hence, 
$\left\{(FHK)\int_{[a,b]_{\mathbb{T}}}f_{n}(x)\Delta x\right\}$ 
is a Cauchy sequence. Let
$$
\lim_{n\rightarrow\infty}(FHK)\int_{[a,b]_{\mathbb{T}}}f_{n}(x)\Delta x 
=\tilde{A}.
$$
We now prove that
$$
\tilde{A}=(FHK)\int_{[a,b]_{\mathbb{T}}}f(x)\Delta x.
$$
Let $\epsilon>0$. By hypothesis, there exists a $\delta\in
\Gamma(\Delta, [a,b]_{\mathbb{T}})$ such that
$$
\mathbf{D}\left(S(f_{n},D,\delta),
\ (FHK)\int_{[a,b]_{\mathbb{T}}}f_{n}(x)\Delta x\right)
<\epsilon
$$
for any $D\in \mathfrak{D}(\delta,[a,b]_{\mathbb{T}})$ and for all $n$.
Choose $N$ that satisfies
$$
\mathbf{D}\left((FHK)\displaystyle \int_{[a,b]_{\mathbb{T}}}f_{n}(x)\Delta x,
\ \tilde{A}\right)<\epsilon
$$
for all $n>N$. For the above $D$ and $N$, there exists $N_{0}>N$ satisfying
$$
\mathbf{D}\left(S(f_{N_{0}},D,\delta),
\ S(f,D,\delta)\right)<\epsilon.
$$
Therefore,
\begin{equation*}
\begin{split}
\mathbf{D}&\left(S(f,D,\delta), \ \tilde{A}\right)\\
&\leq \mathbf{D}\left(S(f,D,\delta), \ S(f_{N_{0}},
D,\delta)\right)+\mathbf{D}\left(S(f_{N_{0}},D,\delta),
\ (FHK)\int_{[a,b]_{\mathbb{T}}}f_{N_{0}}(x)\Delta x\right)\\
&\quad +\mathbf{D}\left((FHK)\int_{[a,b]_{\mathbb{T}}}f_{N_{0}}(x)\Delta x, 
\ \tilde{A}\right)\\
&< 3\epsilon
\end{split}
\end{equation*}
and the result follows.
\end{proof}

\begin{definition}[See \cite{C05}]
\label{defn3.4}
A function $f:[a,b]_{\mathbb{T}}\rightarrow \mathbb{R}$ 
is called absolutely continuous on $[a,b]_{\mathbb{T}}$,
if for each $\epsilon>0$ there exists $\gamma>0$ such that
$$
\sum_{i=1}^{n}|f(x_{i})-f(x_{i-1})|<\epsilon
$$
whenever $\bigcup^{n}_{i=1}[x_{i-1},x_{i}]_{\mathbb{T}}\subset [a,b]_{\mathbb{T}}$
and $\sum_{i=1}^{n}\Delta x_{i}<\gamma$.
\end{definition}

\begin{theorem}[Dominated convergence theorem]
\label{thm3.10}
Let the time scale interval $[a,b]_{\mathbb{T}}$ be given. 
If $f_{n}(x)\in\mathcal{FHK}_{[a,b]_{\mathbb{T}}}$, $n=1,2,\ldots$, satisfy
\begin{enumerate}
\item[(1)] $\lim_{n\rightarrow\infty}f_{n}(x)= f(x)$ $\Delta$ a.e.;

\item[(2)] $G(x)\leq f_{n}(t)\leq H(x)$ $\Delta$ a.e. and
$G(x),\ H(x)\in\mathcal{FHK}_{[a,b]_{\mathbb{T}}}$;
\end{enumerate}
then sequence $\{f_{n}(x)\}$ is uniformly FHK $\Delta$-integrable. 
Thus,
$f(x)\in\mathcal{FHK}_{[a,b]_{\mathbb{T}}}$ and
$$
\lim_{n\rightarrow\infty}(FHK)\int_{[a,b]_{\mathbb{T}}}f_{n}(x)\Delta x
=(FHK)\int_{[a,b]_{\mathbb{T}}}f(x)\Delta x.
$$
\end{theorem}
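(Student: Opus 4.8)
The plan is to pass, via Theorem~\ref{thm3.8}, to the real-valued endpoint functions $\underline{f_{n}^{\alpha}}$ and $\overline{f_{n}^{\alpha}}$, apply the scalar dominated convergence Theorem~\ref{thm3.7} in a form that yields \emph{equi}-integrability, and then invoke Theorem~\ref{thm3.9}. First I would use Theorem~\ref{thm3.6} to modify $f_{n}$, $f$, $G$ and $H$ on the $\Delta$-null set where (1) or (2) fails, so that without loss of generality $\lim_{n\to\infty}f_{n}(x)=f(x)$ and $G(x)\leq f_{n}(x)\leq H(x)$ hold at \emph{every} $x\in[a,b]_{\mathbb{T}}$, none of the FHK $\Delta$-integrals being affected. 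Since $G,H\in\mathcal{FHK}_{[a,b]_{\mathbb{T}}}$, Theorem~\ref{thm3.8} furnishes $\underline{G^{\alpha}},\overline{G^{\alpha}},\underline{H^{\alpha}},\overline{H^{\alpha}}\in\mathcal{HK}_{[a,b]_{\mathbb{T}}}$ together with $\Delta$-gauges that may be chosen independently of $\alpha$; likewise $\underline{f_{n}^{\alpha}},\overline{f_{n}^{\alpha}}\in\mathcal{HK}_{[a,b]_{\mathbb{T}}}$ for every $n$.

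From $G(x)\leq f_{n}(x)\leq H(x)$ one reads off, for every $\alpha\in[0,1]$ and every $x$, the scalar inequalities $\underline{G^{\alpha}}\leq\underline{f_{n}^{\alpha}}\leq\underline{H^{\alpha}}$ and $\overline{G^{\alpha}}\leq\overline{f_{n}^{\alpha}}\leq\overline{H^{\alpha}}$, while $\mathbf{D}(f_{n}(x),f(x))\to 0$ gives $\underline{f_{n}^{\alpha}}(x)\to\underline{f^{\alpha}}(x)$ and $\overline{f_{n}^{\alpha}}(x)\to\overline{f^{\alpha}}(x)$, uniformly in $\alpha$. The heart of the argument is to show that the scalar sequences $\{\underline{f_{n}^{\alpha}}\}_{n}$ and $\{\overline{f_{n}^{\alpha}}\}_{n}$ are equi-HK $\Delta$-integrable and that, moreover, a single $\Delta$-gauge can be taken to work simultaneously for all $n$ \emph{and} all $\alpha$. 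I would obtain this by revisiting the proof of Theorem~\ref{thm3.7}: setting $g_{n}^{\alpha}:=\underline{f_{n}^{\alpha}}-\underline{G^{\alpha}}\geq 0$, each $g_{n}^{\alpha}$ is a nonnegative HK $\Delta$-integrable function dominated by $\underline{H^{\alpha}}-\underline{G^{\alpha}}\in\mathcal{HK}_{[a,b]_{\mathbb{T}}}$, hence Lebesgue $\Delta$-integrable, and the standard Vitali/Saks--Henstock argument behind Theorem~\ref{thm3.7} (absolute continuity of the indefinite $\Delta$-integrals of $\underline{G^{\alpha}}$ and $\underline{H^{\alpha}}$, plus an open covering of the exceptional set) produces, for each $\epsilon>0$, one $\Delta$-gauge controlling $|S(\underline{f_{n}^{\alpha}},D,\delta)-(HK)\int_{[a,b]_{\mathbb{T}}}\underline{f_{n}^{\alpha}}\,\Delta x|<\epsilon$ for all $n$ at once, and symmetrically for $\overline{f_{n}^{\alpha}}$. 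The only $\alpha$-dependent data entering the construction are the indefinite integrals and gauges attached to $\underline{G^{\alpha}},\overline{G^{\alpha}},\underline{H^{\alpha}},\overline{H^{\alpha}}$, which Theorem~\ref{thm3.8} already supplies uniformly in $\alpha$; hence the resulting $\delta$ is independent of both $n$ and $\alpha$.

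Once such a $\delta$ is in hand, the identity established in the proof of Theorem~\ref{thm3.8},
$$
\mathbf{D}\bigl(S(f_{n},D,\delta),\tilde{A}_{n}\bigr)
=\sup_{\alpha\in[0,1]}\max\Bigl\{\bigl|S(\underline{f_{n}^{\alpha}},D,\delta)-\underline{\tilde{A}_{n}^{\alpha}}\bigr|,\ \bigl|S(\overline{f_{n}^{\alpha}},D,\delta)-\overline{\tilde{A}_{n}^{\alpha}}\bigr|\Bigr\},
$$
with $\tilde{A}_{n}=(FHK)\int_{[a,b]_{\mathbb{T}}}f_{n}(x)\,\Delta x$, shows that $\mathbf{D}(S(f_{n},D,\delta),\tilde{A}_{n})<\epsilon$ for every $n$ and every $D\in\mathfrak{D}(\delta,[a,b]_{\mathbb{T}})$; that is, $\{f_{n}\}$ is uniformly FHK $\Delta$-integrable in the sense of Definition~\ref{defn3.3}. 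Theorem~\ref{thm3.9}, applied with hypothesis (1), then delivers $f\in\mathcal{FHK}_{[a,b]_{\mathbb{T}}}$ and $\lim_{n\to\infty}(FHK)\int_{[a,b]_{\mathbb{T}}}f_{n}(x)\,\Delta x=(FHK)\int_{[a,b]_{\mathbb{T}}}f(x)\,\Delta x$, as claimed.

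The step I expect to be the genuine obstacle is the equi-integrability claim together with the $\alpha$-uniform choice of gauge. Theorem~\ref{thm3.7} is quoted only as a statement about one sequence and its limit, so one must either appeal to a time-scale version of the ``majorized $\Rightarrow$ equi-integrable'' lemma or re-run its proof, and then verify with care that every gauge-building step inherits the uniformity in $\alpha$ that Theorem~\ref{thm3.8} provides for $G$ and $H$. The remaining parts are routine manipulations of the metric $\mathbf{D}$ and direct applications of Theorems~\ref{thm3.8} and~\ref{thm3.9}.
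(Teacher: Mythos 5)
Your overall architecture is the right one and matches the paper's: establish that $\{f_{n}\}$ is uniformly FHK $\Delta$-integrable in the sense of Definition~\ref{defn3.3}, then conclude with Theorem~\ref{thm3.9}. But the step you yourself flag as ``the genuine obstacle'' is in fact the entire content of the theorem, and your proposal does not supply it. Theorem~\ref{thm3.7} is available only as a black-box statement about a single sequence and its limit: it asserts integrability of $f$ and convergence of the integrals, and says nothing about equi-integrability or about a gauge that works simultaneously for all $n$. Deferring to ``the standard Vitali/Saks--Henstock argument behind Theorem~\ref{thm3.7}'' therefore reduces the theorem to an unproved strengthening of a cited result; as a proof this is a genuine gap, not a routine verification. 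The additional claim that the gauge can be taken uniform in $\alpha$ compounds the problem: the exceptional sets and convergence rates produced by such an argument depend a priori on the function being integrated, and Theorem~\ref{thm3.8} only hands you $\alpha$-uniform gauges for $G$ and $H$ individually, not for the family $\{\underline{f_{n}^{\alpha}},\overline{f_{n}^{\alpha}}\}$.

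The paper fills exactly this hole with a concrete construction, and it sidesteps the $\alpha$-uniformity issue entirely by never descending to the level sets: it works with the single scalar dominating function $x\mapsto\mathbf{D}(H(x),G(x))$, observing that hypothesis (2) forces $\mathbf{D}(f_{p}(x),f_{q}(x))\leq\mathbf{D}(H(x),G(x))$. The indefinite integral $\mathbf{D}(x)=\int_{[a,x]_{\mathbb{T}}}\mathbf{D}(H(s),G(s))\,\Delta s$ is absolutely continuous; Egorov's theorem gives an open set $\Omega$ of small measure off which $f_{n}\to f$ uniformly; a gauge $\delta_{1}$ is chosen to handle the finitely many $f_{1},\dots,f_{N}$ and the Riemann sums of $\mathbf{D}(H,G)$, and is then refined so that tags in $\Omega$ carry intervals inside $\Omega$. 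Splitting $\mathbf{D}\bigl(S(f_{n},D,\delta),S(f_{N},D,\delta)\bigr)$ over tags inside and outside $\Omega$, the outside part is controlled by uniform convergence and the inside part by the domination together with absolute continuity of $\mathbf{D}(x)$. If you want to salvage your componentwise route, you would have to reproduce essentially this Egorov-plus-absolute-continuity argument anyway (and you could then run it directly on $\mathbf{D}$, as the paper does, rather than on each pair $\underline{f_{n}^{\alpha}},\overline{f_{n}^{\alpha}}$); your reduction via Theorem~\ref{thm3.6} to everywhere convergence and domination is harmless but does not shorten that work.
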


\begin{proof}
By hypothesis, one has
\begin{equation*}
\begin{split}
\mathbf{D}\left(f_{p}(x),f_{q}(x)\right)
&=\sup_{\alpha\in[0,1]}\max\left\{|\underline{f_{p}(x)^{\alpha}}
-\underline{f_{q}(x)^{\alpha}}|,|\overline{f_{p}(x)^{\alpha}}
-\overline{f_{q}(x)^{\alpha}}|\right\}\\
&\leq \sup_{\alpha\in[0,1]}\max\left\{|\underline{H(x)^{\alpha}}
-\underline{G(x)^{\alpha}}|,|\overline{H(x)^{\alpha}}
-\overline{G(x)^{\alpha}}|\right\}\\
&= \mathbf{D}\left(H(x),G(x)\right).
\end{split}
\end{equation*}
Then, $\mathbf{D}\left(H(x),G(x)\right)$ is Lebesgue $\Delta$-integrable.
Let
$$
\mathbf{D}(x)=\int_{[a,x]_{\mathbb{T}}}D\left(H(s),G(s)\right) \Delta s.
$$
From \cite{C05}, $\mathbf{D}(x)$ is absolutely continuous 
on $[a,b]_{\mathbb{T}}$. Let $\epsilon>0$.
Then there exists $\gamma>0$ such that
$$
\sum_{i=1}^{n}|\mathbf{D}(x_{i})-\mathbf{D}(x_{i-1})|<\frac{\epsilon}{b-a}
$$
whenever $\bigcup^{n}_{i=1}[x_{i-1},x_{i}]_{\mathbb{T}}\subset [a,b]_{\mathbb{T}}$
and
$\sum_{i=1}^{n}\Delta x_{i}<\gamma$. The limit
$\lim_{n\rightarrow\infty}f_{n}(x)= f(x)$ holds $\Delta$ a.e. on
$[a,b]_{\mathbb{T}}$ and $\{\mathbf{D}(f_{n}(x), f(x))\}$ is a
sequence of $\Delta$-measurable functions. Thanks to the Egorov's
theorem, there exists an open set $\Omega$ with $m(\Omega)<\delta$ such that
$\lim_{n\rightarrow\infty}f_{n}(x)= f(x)$ uniformly for $x\in
[a,b]_{\mathbb{T}}\backslash \Omega$. Thus, there exists $N$ such that
$\mathbf{D}(f_{p}(x), f_{q}(x))<\frac{\epsilon}{b-a}$ for any $p, q>N$ and for
any $x\in [a,b]_{\mathbb{T}}\backslash \Omega$. Suppose that 
$\delta_{1}\in \Gamma(\Delta, [a,b]_{\mathbb{T}})$ such that
$$
\left|S(\mathbf{D}\left(H(x),G(x)\right),D,\delta_{1})
- \int_{[a,b]_{\mathbb{T}}}\mathbf{D}\left(H(x),G(x)\right)\Delta x\right|
<\epsilon
$$
and
$$
\mathbf{D}\left(S(f_{n},D,\delta_{1}),
\ (FHK)\int_{[a,b]_{\mathbb{T}}}f_{n}(x)\Delta x\right)<\epsilon
$$
for $1\leq n\leq N$ and for any $D\in \mathfrak{D}(\delta_{1},[a,b]_{\mathbb{T}})$. 
Define $\delta\in \Gamma(\Delta, [a,b]_{\mathbb{T}})$ by
\begin{equation*}
\delta(\xi)=
\begin{cases}
\delta_{1}(\xi)& \text{if $\xi\in [a,b]_{\mathbb{T}}\backslash \Omega$},\\
\min\{\delta_{1}(\xi),\rho(\xi,\Omega)\},  & \text{if $\xi\in \Omega$},
\end{cases}
\end{equation*}
where $\rho(\xi,\Omega)=\inf\{|\xi-\xi'|:\xi'\in \Omega\}$. Fix $n>N$. One has
\begin{equation*}
\begin{split}
\mathbf{D}&\left(S(f_{n},D,\delta), S(f_{N},D,\delta)\right)
= \mathbf{D}\left(\sum_{\xi_{i}\in [a,b]_{\mathbb{T}}}f_{n}(\xi_{i})\Delta x_{i},
\sum_{\xi_{i}\in [a,b]_{\mathbb{T}}}f_{N}(\xi_{i})\Delta x_{i}\right)
\end{split}
\end{equation*}
\begin{equation*}
\begin{split}
&\leq \mathbf{D}\left(\sum_{\xi_{i}\in [a,b]_{\mathbb{T}}\backslash \Omega}
f_{n}(\xi_{i})\Delta x_{i},\sum_{\xi_{i}\in [a,b]_{\mathbb{T}}
\backslash \Omega}f_{N}(\xi_{i})\Delta x_{i}\right)\\
&\quad +\mathbf{D}\left(\sum_{\xi_{i}\in \Omega}f_{n}(\xi_{i})\Delta x_{i},
\sum_{\xi_{i}\in \Omega}f_{N}(\xi_{i})\Delta x_{i}\right)\\
&\leq \epsilon +\sum_{\xi_{i}\in \Omega}\mathbf{D}(f_{n}(\xi_{i}),
f_{N}(\xi_{i}))\Delta x_{i}\\
&\leq \epsilon +\left|\sum_{\xi_{i}\in \Omega}\mathbf{D}(H(\xi_{i}),
G(\xi_{i}))\Delta x_{i}-\int_{\Omega}\mathbf{D}(H(x), G(x))\Delta x\right|
+\left|\int_{\Omega}\mathbf{D}(H(x), G(x))\Delta x\right|\\
&\leq 3\epsilon
\end{split}
\end{equation*}
for any $D\in \mathfrak{D}(\delta,[a,b]_{\mathbb{T}})$.
Hence,
\begin{equation*}
\begin{split}
\mathbf{D}&\left(S(f_{n},D,\delta),\ (FHK)\int_{[a,b]_{\mathbb{T}}}f_{n}(x)\Delta x\right)\\
&\leq \mathbf{D}\left(S(f_{n},D,\delta),\ S(f_{N},D,\delta)\right)
+\mathbf{D}\left(S(f_{N},D,\delta),\ (FHK)\int_{[a,b]_{\mathbb{T}}}f_{N}(x)\Delta x\right)\\
&\quad +\mathbf{D}\left((FHK)\int_{[a,b]_{\mathbb{T}}}f_{N}(x)\Delta x,
\ (FHK)\int_{[a,b]_{\mathbb{T}}}f_{n}(x)\Delta x\right)\\
&\leq 5\epsilon.
\end{split}
\end{equation*}
Our dominated convergence theorem is proved.
\end{proof}

As a consequence of Theorem~\ref{thm3.10}, 
we get the following monotone convergence theorem.

\begin{theorem}[Monotone convergence theorem]
\label{thm3.11}
Let the time scale interval $[a,b]_{\mathbb{T}}$ be given. 
If $f_{n}(x)\in\mathcal{FHK}_{[a,b]_{\mathbb{T}}}$,
$n=1,2,\ldots$, satisfy
\begin{enumerate}
\item[(1)] $\lim_{n\rightarrow\infty}f_{n}(x)= f(x)$ $\Delta$ a.e.;

\item[(2)] $\{f_{n}(x)\}$ is a monotone sequence and 
$f_{n}(x)\in\mathcal{FHK}_{[a,b]_{\mathbb{T}}}$;
\end{enumerate}
then $\{f_{n}(x)\}$ is uniformly FHK $\Delta$-integrable. Consequently,
$f(x)\in\mathcal{FHK}_{[a,b]_{\mathbb{T}}}$. Moreover,
$$
\lim_{n\rightarrow\infty}(FHK)\int_{[a,b]_{\mathbb{T}}}f_{n}(x)\Delta x
=(FHK)\int_{[a,b]_{\mathbb{T}}}f(x)\Delta x.
$$
\end{theorem}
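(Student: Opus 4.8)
The plan is to derive Theorem~\ref{thm3.11} from the dominated convergence theorem (Theorem~\ref{thm3.10}) by constructing fuzzy-valued dominating functions from the first and the limiting terms of the monotone sequence. First I would fix notation: suppose, without loss of generality, that $\{f_n(x)\}$ is nondecreasing in the fuzzy order, i.e., $\underline{f_n(x)^{\alpha}}\leq\underline{f_{n+1}(x)^{\alpha}}$ and $\overline{f_n(x)^{\alpha}}\leq\overline{f_{n+1}(x)^{\alpha}}$ for all $\alpha\in[0,1]$ (the nonincreasing case is symmetric, or follows by replacing $f_n$ with $f_1\oplus(-1)\odot f_n$ type arguments, but cleanest is to just treat both cases in parallel). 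Set $G(x):=f_1(x)$ and $H(x):=f(x)$, the pointwise limit. By hypothesis (2) together with (1), for $\Delta$-a.e. $x$ we have $G(x)\leq f_n(x)\leq H(x)$ in the sense that each $\alpha$-level endpoint of $f_n(x)$ lies between the corresponding endpoints of $G(x)$ and $H(x)$; this is exactly condition (2) of Theorem~\ref{thm3.10}.

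The remaining point to verify before invoking Theorem~\ref{thm3.10} is that $H(x)=f(x)$ is itself FHK $\Delta$-integrable, since $G=f_1$ is integrable by hypothesis but $f$ is only assumed to be the pointwise a.e.\ limit. Here I would use Theorem~\ref{thm3.8}: it suffices to show that $\underline{f(x)^{\alpha}}$ and $\overline{f(x)^{\alpha}}$ are HK $\Delta$-integrable uniformly in $\alpha$. For each fixed $\alpha$, the sequences $\underline{f_n(x)^{\alpha}}$ and $\overline{f_n(x)^{\alpha}}$ are monotone in $n$, converge $\Delta$-a.e.\ to $\underline{f(x)^{\alpha}}$ and $\overline{f(x)^{\alpha}}$ respectively, and are squeezed between the HK $\Delta$-integrable functions coming from $f_1$ and $f_2$ (say $\underline{f_1^{\alpha}}\leq\underline{f_n^{\alpha}}\leq\overline{f_1^{\alpha}}$ is too crude; better: the monotone sequence is bounded above/below by the a.e.\ limit which we first establish is integrable by an independent monotone-convergence argument for the real-valued HK $\Delta$-integral, then apply Theorem~\ref{thm3.7}). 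Concretely: real-valued HK monotone convergence (a standard consequence of Theorem~\ref{thm3.7} once one knows the limit integrals are bounded, which follows from $\sup_n (HK)\int \underline{f_n^{\alpha}} \le (HK)\int \overline{f_1^{\alpha}}$ using $\underline{f_n^\alpha}\le \overline{f_n^\alpha}\le\overline{f_1^\alpha}$ in the nondecreasing case) gives $\underline{f^{\alpha}},\overline{f^{\alpha}}\in\mathcal{HK}_{[a,b]_{\mathbb{T}}}$. Uniformity in $\alpha$ of the gauge is inherited because, by Theorem~\ref{thm3.8} applied to $f_1$ and to $f$ (once integrability is known), the construction in Theorem~\ref{thm3.7} can be run with a single $\alpha$-independent gauge, the bounding functions $\underline{f_1^\alpha},\overline{f_1^\alpha}$ themselves being uniformly HK $\Delta$-integrable.

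With $f\in\mathcal{FHK}_{[a,b]_{\mathbb{T}}}$ in hand and the domination $G=f_1\leq f_n\leq f=H$ holding $\Delta$-a.e.\ with $G,H\in\mathcal{FHK}_{[a,b]_{\mathbb{T}}}$, all three hypotheses of Theorem~\ref{thm3.10} are met, so $\{f_n(x)\}$ is uniformly FHK $\Delta$-integrable and, by the conclusion of Theorem~\ref{thm3.10} (equivalently Theorem~\ref{thm3.9}),
$$
\lim_{n\rightarrow\infty}(FHK)\int_{[a,b]_{\mathbb{T}}}f_{n}(x)\Delta x
=(FHK)\int_{[a,b]_{\mathbb{T}}}f(x)\Delta x,
$$
which is the assertion. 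I expect the main obstacle to be the second paragraph: establishing that the pointwise a.e.\ limit $f$ is FHK $\Delta$-integrable with a gauge uniform in $\alpha$. The delicate part is not the monotone convergence for each fixed $\alpha$ — that is routine from Theorem~\ref{thm3.7} — but checking that the level functions $\underline{\tilde A^{\alpha}}=(HK)\int\underline{f^{\alpha}}\,\Delta x$ and $\overline{\tilde A^{\alpha}}=(HK)\int\overline{f^{\alpha}}\,\Delta x$ indeed assemble into a fuzzy number (monotonicity in $\alpha$, closedness, and the intersection property of Lemma~\ref{lem2.1}(3)), which is precisely the verification already carried out in the sufficiency part of the proof of Theorem~\ref{thm3.8}; so in practice one reduces to quoting that argument. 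If one prefers to avoid re-establishing integrability of $f$ by hand, an alternative is to first prove that $\{f_n\}$ is uniformly FHK $\Delta$-integrable directly — mimicking the Egorov-set construction in the proof of Theorem~\ref{thm3.10} with $\mathbf{D}(f_p,f_q)\le \mathbf{D}(f_1,f)$ playing the role of $\mathbf{D}(H,G)$ — and then invoke Theorem~\ref{thm3.9}, which yields both $f\in\mathcal{FHK}_{[a,b]_{\mathbb{T}}}$ and the limit of integrals simultaneously.
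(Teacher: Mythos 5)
The paper offers no actual proof of Theorem~\ref{thm3.11}; it only remarks that the result is a consequence of Theorem~\ref{thm3.10}, and your overall strategy --- apply the dominated convergence theorem with $G(x)=f_{1}(x)$ and $H(x)=f(x)$ --- is precisely the intended one. You also correctly isolate the real obstacle: Theorem~\ref{thm3.10} requires $H\in\mathcal{FHK}_{[a,b]_{\mathbb{T}}}$, whereas $f$ is only given as an a.e.\ limit, so its integrability must be established before the dominated convergence theorem can be invoked.

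Your patch for that obstacle, however, does not close the gap. The bound you use to control the integrals, $\underline{f_{n}^{\alpha}}\leq\overline{f_{n}^{\alpha}}\leq\overline{f_{1}^{\alpha}}$, points the wrong way in the nondecreasing case: there $\overline{f_{n}^{\alpha}}$ increases with $n$, so $\overline{f_{n}^{\alpha}}\geq\overline{f_{1}^{\alpha}}$. More seriously, no bound of this kind is available from the stated hypotheses at all: every monotone convergence theorem for a gauge-type integral needs the additional hypothesis that the integrals $\int f_{n}$ remain bounded (equivalently, converge), and that hypothesis appears neither in the theorem nor in your argument. Without it the conclusion is false --- on $\mathbb{T}=[0,1]$ the crisp functions $f_{n}(x)=\min\{n,1/x\}$ (with $f_{n}(0)=n$) are HK $\Delta$-integrable and nondecreasing, yet converge a.e.\ to $1/x$, which is not HK integrable --- so the second paragraph of your proposal cannot be completed as written. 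The same circularity affects your uniformity-in-$\alpha$ step (``by Theorem~\ref{thm3.8} applied to $f$ (once integrability is known)'') and your alternative route, since the latter needs $\mathbf{D}(f_{1}(x),f(x))$ to be Lebesgue $\Delta$-integrable and hence again presupposes control of $f$. The derivation becomes salvageable if one adds the hypothesis that the sequence of integrals $(FHK)\int_{[a,b]_{\mathbb{T}}}f_{n}(x)\Delta x$ converges (or is bounded in the metric $\mathbf{D}$); the levelwise real-valued monotone convergence theorem then yields $\underline{f^{\alpha}},\overline{f^{\alpha}}\in\mathcal{HK}_{[a,b]_{\mathbb{T}}}$, after which the rest of your reduction to Theorem~\ref{thm3.10} is sound.
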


% ---------------------------------------------------

\section{Conclusion}
\label{sec:4}

We investigated the fuzzy Henstock--Kurzweil (FHK) 
delta integral on time scales.
Our results give a common generalization of the classical FHK 
and HK integrals. For future researches, we will investigate
the characterization of FHK delta integrable functions.
Another interesting line of research consists to study 
the concept of fuzzy Henstock--Stieltjes integral on time scales.

% ---------------------------------------------------

\begin{acknowledgement}
This research is supported by Chinese Fundamental Research Funds
for the Central Universities, grant 2017B19714 (Ye, Liu and Zhao);
by the Educational Commission of Hubei Province, grant B2016160 (Zhao);
and by Portuguese funds through FCT and CIDMA, within project 
UID/MAT/04106/2013 (Torres).
\end{acknowledgement}

% ---------------------------------------------------

% ---------------------------------------------------

\end{document}